\DeclareFontFamily{OMS}{rsfs}{\skewchar\font'60}
\DeclareFontShape{OMS}{rsfs}{m}{n}{<-5>rsfs5 <5-7>rsfs7 <7->rsfs10 }{}
\DeclareSymbolFont{rsfs}{OMS}{rsfs}{m}{n}
\DeclareSymbolFontAlphabet{\scr}{rsfs}
\newtheorem{theorem}{Theorem}[section]
\newtheorem{lemma}[theorem]{Lemma}
\newtheorem{proposition}[theorem]{Proposition}
\newtheorem{corollary}[theorem]{Corollary}
\theoremstyle{definition}
\newtheorem{definition}[theorem]{Definition}
\theoremstyle{remark}
\newtheorem{remark}[theorem]{Remark}
\newtheorem{question}[theorem]{Question}
\theoremstyle{definition}
\newtheorem{definition-proposition}[theorem]{Definition-Proposition}
\newcommand{\reduced}{non-degenerate}
\newcommand{\ureduced}{non-degenerately generated}
\newcommand{\freegen}{pseudo-$\Q$-Gorenstein}
\newcommand{\homgen}{homogeneously defined}
\newcommand{\homgenbs}{HDBS}
\renewcommand{\O}{\mathcal{O}}
\DeclareMathOperator{\vlocus}{{DegenLocus}}
\DeclareMathOperator{\Spec}{{Spec}}
\newcommand{\tld}{\widetilde }
\newcommand{\bQ}{\mathbb{Q}}
\newcommand{\ba}{\mathfrak{a}}
\newcommand{\mJ}{\mathcal{J}}
\DeclareMathOperator{\Hom}{Hom}
\newcommand{\sT}{\scr{T}}
\newcommand{\sC}{\scr{C}}
\DeclareMathOperator{\Supp}{{Supp}}
\newcommand{\sB}{\scr{B}}
\DeclareMathOperator{\Image}{Image}
\newcommand{\bb}{\mathfrak{b}}
\newcommand{\bm}{\mathfrak{m}}
\newcommand{\Q}{\mathbb {Q}}
\newcommand{\tensor}{\otimes}
\newcommand{\sS}{\scr{S}}
\renewcommand{\th}{\ensuremath{^\text{th}}}
\begin{document}

\title{Test ideals in non-$\bQ$-Gorenstein rings}
\author{Karl Schwede}

\address{Department of Mathematics\\ University of Michigan\\ East Hall
530 Church Street \\ Ann Arbor, Michigan, 48109}
\email{kschwede@umich.edu}

\subjclass[2000]{13A35, 14B05}
\keywords{tight closure, test ideal, $\bQ$-Gorenstein, log $\bQ$-Gorenstein, multiplier ideal, $F$-singularities}

\thanks{The author was partially supported by a National Science Foundation postdoctoral fellowship and by RTG grant number 0502170.}

\begin{abstract}
Suppose that $X = \Spec R$ is an $F$-finite normal variety in characteristic $p > 0$.  In this paper we show that the big test ideal $\tau_b(R) = \tld \tau(R)$ is equal to $\sum_{\Delta} \tau(R; \Delta)$ where the sum is over $\Delta$ such that $K_X + \Delta$ is $\bQ$-Cartier.  This affirmatively answers a question asked by various people, including Blickle, Lazarsfeld, K. Lee and K. Smith.  Furthermore, we have a version of this result in the case that $R$ is not even necessarily normal.
\end{abstract}
\maketitle
\section{Introduction}

Suppose that $X = \Spec R$ is a normal $\bQ$-Gorenstein variety in characteristic zero.  For any ideal sheaf $\ba$ on $X$ and any positive real number $t > 0$, we can define the multiplier ideal $\mJ(X; \ba^t)$ which reflects subtle local properties of both $X$ and elements of $\ba$, see for example \cite[Chapter 9]{LazarsfeldPositivity2}.  Furthermore, if one reduces $X$ and $\ba$ to characteristic $p \gg 0$, then the multiplier ideal $\mJ(X_p; \ba_p^t)$ agrees with the big test ideal $\tau_b(R_p; \ba_p^t) = \tld \tau(R_p, \ba_p^t)$ (the big test ideal is also called the non-finitistic test ideal), see \cite{SmithMultiplierTestIdeals}, \cite{HaraInterpretation}, \cite{HaraYoshidaGeneralizationOfTightClosure}, and \cite{TakagiInterpretationOfMultiplierIdeals}.  However, while at least classically, multiplier ideals need the $\bQ$-Gorenstein hypothesis in order to be defined, big test ideals do not.

In characteristic zero, one way to get around this difficulty is to consider an additional $\bQ$-divisor $\Delta$ on $X$ such that $K_X + \Delta$ is $\bQ$-Cartier.  In that setting, one can define the multiplier ideal $\mJ(X; \Delta, \ba^t)$ of the triple $(X, \Delta, \ba^t)$.  Unfortunately, there is no canonical choice of $\Delta$ (although there can be quite ``good'' choices of $\Delta$, see \cite{DeFernexHacon}).  On the other hand, Hara, Yoshida and Takagi defined the big test ideal $\tau_b(R; \Delta, \ba^t)$ of a triple $(X, \Delta, \ba^t)$ and showed it agreed with the multiplier ideal $\mJ(X; \Delta, \ba^t)$ after reduction to characteristic $p \gg 0$, see \cite{HaraYoshidaGeneralizationOfTightClosure} and \cite{TakagiInterpretationOfMultiplierIdeals}.

Suppose now we work in a fixed characteristic $p > 0$.  It is then very natural to ask how the test ideal $\tau_b(R; \ba^t)$ is related to the test ideals $\tau_b(R; \Delta, \ba^t)$ where $\Delta$ ranges over all divisors such that $K_X + \Delta$ is $\bQ$-Cartier.  It is easy to see that one has containments $\tau_b(R; \Delta, \ba^t) \subseteq \tau_b(R; \ba^t)$.  Furthermore, for several years it has been asked whether in fact one has
\[
\tau_b(R; \ba^t) = \sum_{\Delta} \tau_b(R; \Delta, \ba^t)
\]
where the sum is over $\Delta$ such that $K_X + \Delta$ is $\bQ$-Cartier. The main result of this paper is an affirmative answer to this question.  This result is known in the toric case by \cite[Corollary to Theorem 3]{BlickleMultiplierIdealsOnToric} (also see \cite[Theorem 4.8]{HaraYoshidaGeneralizationOfTightClosure}) and Blickle asked whether it always holds.  Another version of this question is found in \cite[Remark 3.6]{LazarsfeldLeeSmithSyzygiesOnSingular}.  A similar variant was asked by Lazarsfeld during the open problems session at the ``$F$-singularities and $D$-modules'' conference held at the University of Michigan in August 2007.  In fact, we show somewhat more.  We show that one can restrict to only considering $\Delta$ so that $K_X + \Delta$ is $\bQ$-Cartier with index not divisible by $p > 0$.  Therefore our main result is the following:
\vskip 6pt \hskip -12pt
{\bf Corollary \ref{CorMain}. }{\it
Suppose that $(X = \Spec R, \ba^t)$ is a pair and that $R$ is an $F$-finite normal domain.  Then
\[
\tau_b(R; \ba^t) = \sum_{\Delta} \tau_b(R; \Delta, \ba^t)
\]
where the sum is over effective $\bQ$-divisors $\Delta$ such that $K_X + \Delta$ is $\bQ$-Cartier with index not divisible by $p > 0$.
}
\vskip 6pt
\hskip -12pt
In fact, we even have a version of this result that works in the case that $X = \Spec R$ is not necessarily normal, see Theorem \ref{ThmMain}.  One can also obtain the following, which may be of independent interest.
\vskip 6pt \hskip -12pt
{\bf Corollary \ref{CorDontNeedBAT}. }{\it
Suppose that $(X = \Spec R, \ba^t)$ is a pair and that $R$ is an $F$-finite normal domain.  Then there exists finitely many effective $\bQ$-divisors $\Delta_i$ such that $K_X + \Delta_i$ is $\bQ$-Cartier with index not divisible by $p$ and also that
\[
\tau_b(R; \ba^t) = \sum_{\Delta_i} \tau_b(R; \Delta_i).
\]
}
\vskip 6pt
\hskip -12pt
It should also be noted that changing the $t$ in the previous corollary will change the $\Delta_i$ in ways which we do not know how to control.

Recently de Fernex and Hacon have worked out a theory of multiplier ideals $\mJ(X; \ba^t)$ for pairs $(X, \ba^t)$ where $X$ is not necessarily $\bQ$-Gorenstein, see \cite{DeFernexHacon}.  In particular, it is known that $\mJ(X; \ba^t) = \sum_{\Delta} \mJ(X; \Delta, \ba^t)$ where the sum ranges over $\Delta$ such that $K_X + \Delta$ is $\bQ$-Cartier (in fact, they show that there exists a \emph{single} $\Delta$ such that $\mJ(X; \ba^t) = \mJ(X; \Delta, \ba^t)$).  Therefore our result could be viewed as additional justification for their definition.

To prove our main result, we use the fact that the non-zero elements $\phi \in \Hom_R(F^e_*R, R)$ induce $\bQ$-divisors $\Delta$ such that $(K_X + \Delta)$ is $\bQ$-Cartier with index not divisible by $p$, see \cite{SchwedeFAdjunction} for details.  Another key point in the proof is the construction of an element $c \in R^{\circ}$ that is simultaneously a big sharp test element for a (specially chosen) infinite collection of triples $(R, \Delta_{\alpha}, \ba^t)$, where $\alpha$ ranges over some indexing set, see Proposition \ref{PropUniformExistenceOfTestElements}.  In proving our result, we also develop a theory of pairs $(R, \sT)$ where $\sT$ is a graded subalgebra of the non-commutative algebra $\oplus_{e \geq 0} \Hom_R(F^e_* R, R)$ (which is Matlis-dual to the non-commutative algebra $\mathcal{F}(E_R(k))$ as studied by Lyubeznik and Smith, \cite{LyubeznikSmithCommutationOfTestIdealWithLocalization}).  In particular, our pairs $(R, \sT)$ are strict generalizations of triples $(R, \Delta, \ba^t)$.  See Remark \ref{RemCanConstructNewAlgebras}.

We conclude the paper with an additional question related to the work of de Fernex and Hacon, a brief discussion of the differences between the big test ideal and the finitistic (classical) test ideal, and a comparison of the work done in this paper with some of the results of \cite{SchwedeSmithLogFanoVsGloballyFRegular}.
\vskip 12pt
\hskip -12pt{\it Acknowledgments: }  The author would like to thank Manuel Blickle, Tommaso de Fernex, Mel Hochster and Shunsuke Takagi for several valuable discussions and encouragement.  In particular, the author would also like to thank Manuel Blickle for several excellent suggestions for terminology.  The author would also like to thank Jeremy Berquist, Tommaso de Fernex and Wenliang Zhang for comments on a previous draft of this paper.  Finally, the author would like to thank the referee for numerous useful comments, suggestions, and corrections.

\section{Preliminaries and notation}

In this section we recall the basic definitions and notations we will study in this paper.  Throughout this paper, all rings will be assumed to be Noetherian.  Unless otherwise specified, all rings considered will be assumed to contain a field of characteristic $p > 0$.  Recall that there is the Frobenius action on any such ring $R$.  This is the ring homomorphism $F : R \rightarrow R$ that sends elements of $R$ to their $p$\th{} powers.  We can also iterate the Frobenius morphism $e$-times, $\xymatrix{R \ar[r]^F & R \ar[r]^F & \ldots \ar[r]^F & R}$, and we denote the composition by $F^e : R \rightarrow R$.

We can then view $R$ as an $R$-module via the action of the $e$-iterated Frobenius.  We will use the notation $F^e_* R$ to denote this module (which we will still sometimes view as a ring in its own right).  Note that this notation is justified, because if $X = \Spec R$ and we abuse notation and use $F^e : X \rightarrow X$ to denote the map of schemes induced by $F^e$, then $F^e_* \O_X$ is the module corresponding to $F^e_* R$.  Likewise, for any $R$-module $M$, we use $F^e_* M$ to denote $M$ viewed as an $R$-module by the $e$-iterated Frobenius action.  This notation also lets us identify the Frobenius map $F^e$ with an $R$-linear map $R \rightarrow F^e_* R$ which we also denote by $F^e$.

\begin{definition}
We say that $R$ is \emph{$F$-finite} if $F^e_* R$ is finite as an $R$-module.
\end{definition}

\begin{remark}
For example, any ring that is essentially of finite type over a perfect field is $F$-finite, see \cite{FedderFPureRat}.
\end{remark}

\emph{All} rings in this paper will be assumed to be $F$-finite.  In particular, since an $F$-finite ring is excellent, all rings in this paper will be assumed to be excellent, see \cite{KunzOnNoetherianRingsOfCharP}.

Throughout this paper, we will consider the module $\Hom_R(F^e_* R, R)$.  While we will not need this directly, it may be useful for the reader to note that if $X = \Spec R$ is normal and has a dualizing complex, then
\[
\Hom_{\O_X}(F^e_* \O_X, \O_X) \cong F^e_* \O_X((1 - p^e)K_X)
\] as $F^e_* \O_X$-modules; see \cite{MehtaRamanathanFrobeniusSplittingAndCohomologyVanishing} and \cite{HaraWatanabeFRegFPure}.

Finally, we briefly remind the reader of the definition of $\bQ$-divisors.
If $X$ is normal, then a $\bQ$-divisor on $X$ is a formal sum of prime divisors on $X$ with rational coefficients.  A $\bQ$-divisor $D$ on $X$ will be called $\bQ$-Cartier if and only if there exists an integer $n > 0$ such that $n D$ is an integral Cartier divisor (ie, $nD$ is locally trivial in the divisor class group).  Abusing notation, if $X = \Spec R$ is normal, and $D$ is any integral divisor on $X$, then we will use $R(D)$ to denote the global sections of $\O_X(D)$.

\section{An ultimate generalization of pairs}

In this section we introduce a (perhaps ultimate) generalization of a pair in the characteristic $p > 0$ setting.  Our notion encompasses the  triples $(R, \Delta, \ba^t)$, see \cite{HaraYoshidaGeneralizationOfTightClosure} \cite{TakagiInterpretationOfMultiplierIdeals}, and \cite{TakagiPLTAdjoint}, as a special case, see Remark \ref{RemCanConstructNewAlgebras}. It also seems well behaved for any $F$-finite ring (we however, restrict to the reduced case).

Suppose that $R$ is an $F$-finite and reduced ring.  For each $e \geq 0$, we can consider the module $\Hom_R(F^e_* R, R)$.  We take the direct sum of these modules together to form a non-commutative graded ring:
\[
\sC(R) = \oplus_{e \geq 0} \Hom_R(F^e_* R, R)
\]
where the multiplication for homogeneous elements
\[
\phi \in \Hom_R(F^d_* R, R) = \sC(R)_d\text{ and }\psi \in \Hom_R(F^e_* R, R) = \sC(R)_e
\]
is defined as follows:
\[
\phi \cdot \psi = \phi \circ (F^d_* \psi) \in \Hom_R(F^{d+e}_* R, R).
\]

\begin{definition}
Given a $\phi \in \Hom_R(F^e_* R, R)$, we define the \emph{degeneracy locus of $\phi$} (denoted $\vlocus(\phi)$) to be the subset of $\Spec R$ made up of primes $Q \in \Spec R$ such that the image of $\phi$ in
\[
\Hom_R(F^e_* R, R)_Q \cong \Hom_{R_Q}(F^e_* R_Q, R_Q)
\]
does \emph{not} generate $\Hom_{R_Q}(F^e_* R_Q, R_Q)$ as an $F^e_* R_Q$-module.
\end{definition}

\begin{remark}
If $R$ is normal and $\phi$ corresponds to a $\bQ$-divisor $\Delta$ as in \cite{SchwedeFAdjunction}, then $\vlocus(\phi) = \Supp(\Delta)$.
\end{remark}

We will now remark that the degeneracy locus is always closed subset of $\Spec R$.

\begin{lemma}
Given $\phi \in \Hom_R(F^e_* R, R)$, then $\vlocus(\phi)$ is a closed subset of $X = \Spec R$.
\end{lemma}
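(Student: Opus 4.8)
The plan is to realize $\vlocus(\phi)$ as the support of a single finitely generated $R$-module, which is automatically closed in $\Spec R$. Consider the $F^e_* R$-linear (and hence in particular $R$-linear) map
\[
\Psi \colon F^e_* R \longrightarrow \Hom_R(F^e_* R, R), \qquad F^e_* r \longmapsto \big(\, F^e_* x \mapsto \phi(F^e_*(rx)) \,\big),
\]
whose image is, by construction, the cyclic $F^e_* R$-submodule of $\Hom_R(F^e_* R, R)$ generated by $\phi$. Set $M = \operatorname{coker}(\Psi)$. Since $R$ is Noetherian and $F$-finite, $F^e_* R$ is a finitely presented $R$-module; hence $\Hom_R(F^e_* R, R)$ is a finitely generated $R$-module (it embeds into $R^n$ for any surjection $R^n \twoheadrightarrow F^e_* R$), and therefore so is its quotient $M$.

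Next I would verify that the formation of $M$ commutes with localization. Since localization is exact, it is enough to know that $\Hom_R(F^e_* R, R)$ commutes with localization, and this holds precisely because $F^e_* R$ is finitely presented over $R$: for every prime $Q$ there is a canonical isomorphism $\Hom_R(F^e_* R, R)_Q \cong \Hom_{R_Q}(F^e_* R_Q, R_Q)$, compatible with the $F^e_* R$-module structures, under which the localized map $\Psi_Q$ has image equal to the cyclic $F^e_* R_Q$-submodule generated by the image of $\phi$. Consequently $M_Q \cong \operatorname{coker}(\Psi_Q)$, so $M_Q = 0$ if and only if the image of $\phi$ generates $\Hom_{R_Q}(F^e_* R_Q, R_Q)$ as an $F^e_* R_Q$-module.

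Comparing with the definition of the degeneracy locus, this yields $\vlocus(\phi) = \{\, Q \in \Spec R : M_Q \neq 0 \,\} = \Supp M$. As $M$ is finitely generated over the Noetherian ring $R$, we have $\Supp M = V(\operatorname{Ann}_R M)$, which is a closed subset of $\Spec R$, and the lemma follows. The only slightly delicate point is the interchange of $\Hom$ with localization, and the hypothesis that $R$ is $F$-finite is exactly what guarantees $F^e_* R$ is finitely presented so that this interchange is valid; the remainder of the argument is formal.
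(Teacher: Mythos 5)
Your proof is correct and is essentially the paper's argument spelled out in full: the paper simply observes that $\Hom_R(F^e_* R, R)$ is a finitely generated $F^e_* R$-module, the point being exactly that $\vlocus(\phi)$ is the support of the finitely generated cokernel of the map $F^e_* R \to \Hom_R(F^e_* R, R)$ given by premultiplying $\phi$, hence closed. Your verification that this identification is compatible with localization (via finite presentation of $F^e_* R$) is the detail the paper leaves implicit, already built into its definition of the degeneracy locus.
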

\begin{proof}
It is obvious since $\Hom_R(F^e_* R, R)$ is a finitely generated $F^e_* R$-module.
\end{proof}

\begin{definition}
A \emph{pair $(R, \sT)$} is the combined information of a reduced $F$-finite ring $R$, and a graded subalgebra $\sT \subseteq \sC(R)$ such that $\sT_0 = \sC(R)_0 = \Hom_R(R, R) \cong R$.  A \emph{triple $(R, \sT, \ba^t)$} is the combined information of a pair $(R, \sT)$, an ideal $\ba \subseteq R$ and a positive real number $t > 0$.
\end{definition}

\begin{remark}
Using the assumption that $\sT_0 \cong R$, we immediately see that for each $i$, $\sT_i$ has the structure of an $F^i_* R$-module (note that $F^i_* R$ is also a ring in its own right).
\end{remark}

\begin{remark}
When we consider a pair $(R, \ba^t)$ we implicitly are referring to the triple $(R, \sC(R), \ba^t)$.  Furthermore, while it is true that for any triple, $(R, \sT, \ba^t)$, there is a way to ``absorb'' $\ba^t$ into $\sT$ (and create a smaller subalgebra), see Remark \ref{RemCanConstructNewAlgebras}, we will find it convenient to separate the terms at times.
\end{remark}

\begin{remark}
\label{RemarkNaturalMapsToR}
 One interesting and sometimes useful property of the algebra $\sC(R)$ is the fact that there are natural maps
\[
\rho_{i,j} : \sC(R)_i = \Hom_R(F^i_* R, R) \rightarrow \Hom_R(F^j_* R, R) = \sC(R)_j
 \]
 for $i > j$, induced by the inclusion $F^j_* R \subseteq F^i_* R$.  Our subalgebras are \emph{NOT} assumed to satisfy any analog of this property.  However, we still have natural maps $\sT_i \rightarrow \Hom_R(R, R) \cong R$, for any graded subalgebra $\sT \subseteq \sC(R)$.  These maps are induced by restricting the natural maps $\sC(R)_i \rightarrow \Hom_R(R, R) \cong R$, to $\sT \subseteq \sC(R)$.  Alternately, these are the maps defined by evaluation at $1$.

 Furthermore, suppose that some $F^i_* R$-submodule $\sB \subseteq \sC(R)_i$ is generated (as an $F^i_* R$-module) by elements $\phi_1, \ldots, \phi_n$.  Then notice that $$\Image(\rho_{i,0}|_{\sB} ) = \sum_i \Image(\phi_i).$$  The containment $\subseteq$ follows since each $\phi \in \sB$ can be written as a sum $\sum_j (\phi_j \cdot x_j)$ for some $x_j \in F^i_* R$.  Then
 \[
 \phi(1) \in \Image(\phi) \subseteq \sum_j \Image(\phi_j \cdot x_j) \subseteq \sum_j \Image(\phi_j).
 \]
 The containment $\supseteq$ follows since if $x \in \Image(\phi)$, then there exists some $y \in F^i_* R$ so that $\phi(y) = x$ which implies that $x = (\phi \cdot y) (1)$.
\end{remark}

\begin{definition}  Suppose that $(R, \sT)$ is a pair.  Then, given any multiplicative subset $W$ of $R$, we can construct a new pair $(W^{-1} R, W^{-1} \sT)$ in the obvious way.  A homogeneous element $\phi \in \sT_e$ will be called \emph{\reduced} if its image in $(R_{\eta}, \sT_{\eta})$ is non-zero for every minimal prime $\eta$ of $\Spec R$. We say that a pair $(R, \sT)$ is \emph{\reduced} if it contains a \reduced{} element $\phi \in \sT_e$ for some $e > 0$.  We say that $(R, \sT)$ is \emph{\ureduced{}} if, for every $e > 0$ such that $\sT_e$ is non-zero, we have that $\sT_e$ is generated as an $F^e_* R$-module by \reduced{} elements.

We say that a triple $(R, \sT, \ba^t)$ is \emph{\reduced} if $(R, \sT)$ is \reduced{} and in addition, $\ba \cap R^{\circ} \neq \emptyset$.  We say that $(R, \sT, \ba^t)$ is \emph{\ureduced{}} if $(R, \sT)$ is \ureduced{} and $\ba \cap R^{\circ} \neq \emptyset$.  Note this also implies that the subalgebra $\sT'$ defined by $\sT'_e = \sT_e \cdot \left(F^e_* \ba^{\lceil t(p^e - 1)\rceil} \right)$ is also \ureduced{}.
\end{definition}

\begin{remark}
Because $R$ is a reduced ring, $(R, \sT)$ is \reduced{} if and only if there exists some homogeneous $\phi \in \sT_{> 0}$ such that $X \setminus \vlocus(\phi)$ is open and dense.  Also note that if $(R, \sT, \ba^t)$ is reduced and $\phi \in \sT$ is a reduced homogeneous element, then there exists some $c \in \ba \cap R^{\circ}$  such that $\Supp(\phi) \subseteq V(c) \subseteq \Spec R$.
\end{remark}

\begin{remark}
\label{RemCanConstructNewAlgebras}
From the point of view of $F$-singularities, pairs $(R, \sT)$ (respectively triples $(R, \sT, \ba^t)$) are generalizations of triples $(R, \Delta, \bb_{\bullet})$ (respectively triples $(R, \sT, \bb_{\bullet} \cdot \ba^t)$) as studied by several authors (here $\bb_{\bullet}$ is a graded system of ideals\footnote{Associated with any $\ba^t$ (an ideal $\ba$ formally raised to a real power $t > 0$) there is a graded system defined by $\bb_i = \ba^{\lceil t i \rceil}$.}); see for example \cite{TakagiInterpretationOfMultiplierIdeals}, \cite{HaraYoshidaGeneralizationOfTightClosure}, \cite{HaraACharacteristicPAnalogOfMultiplierIdealsAndApplications}, \cite{BlickleMustataSmithDiscretenessAndRationalityOfFThresholds}, \cite{SchwedeCentersOfFPurity}, \cite[Remark 2.8]{SchwedeFAdjunction}.  To construct the pair $(R, \sT)$ associated to $(R, \Delta, \bb_{\bullet})$, proceed as follows:  Define $\sT_i$ to be
\[
\Image\bigg(\Hom_R(F^i_* R((p^i - 1)\Delta), R) \rightarrow \Hom_R(F^i_* R, R) \bigg) \cdot (F^i_* \bb_{p^i - 1}).
\]
To see that $\oplus_i \sT_i$ is a sub\emph{algebra} of $\sC(R)$, simply make the following two observations.
\begin{itemize}
\item[(1)]  If $b \in \bb_{p^i - 1}$ and $b' \in \bb_{p^j - 1}$, then $b^{p^j} b' \in \bb_{p^j(p^i - 1) + (p^j - 1)} = \bb_{p^{i+j} - 1}$.
\item[(2)]  If $\phi \in \sT_i$ and $\psi \in \sT_j$ then $\phi \circ F^i_* \psi \in \sT_{i+j}$ (this follows since, $p^j \lceil (p^i - 1)\Delta \rceil + \lceil (p^j - 1) \Delta \rceil \geq \lceil (p^{i+j}- 1)\Delta \rceil$).
\end{itemize}
This method also allows one to obtain triples $(R, \sT, \ba^t)$ from other triples $(R, \sT, \bb_{\bullet} \cdot \ba^t)$.  As we will see, changing the triple in this way does not impact the associated test ideals.
\end{remark}

Note that if $R$ is a normal domain and $\ba_i \cap R^{\circ} \neq \emptyset$ for some $i > 0$, then the pair $(R, \sT)$ constructed from the triple $(R, \Delta, \ba_{\bullet})$ is \reduced.

\begin{definition}\cite{HochsterRobertsRingsOfInvariants}, \cite{HaraWatanabeFRegFPure}, \cite{SchwedeSharpTestElements}, \cite{SchwedeCentersOfFPurity}
A pair $(R, \sT)$ is \emph{sharply $F$-pure} if there exists a homogeneous $\phi \in \sT_e$ such that $\phi(F^e_* R) = R$ (that is, $\phi$ is surjective).  A triple $(R, \sT, \ba^t)$ is called \emph{sharply $F$-pure} if there exists a homogeneous $\phi \in  \sT_e \cdot \left(F^e_* \ba^{\lceil t(p^e - 1) \rceil} \right)$ such that $\phi(F^e_* R) = R$.
\end{definition}

\begin{definition} \cite{HochsterHunekeFRegularityTestElementsBaseChange}, \cite{HaraWatanabeFRegFPure}, \cite{TakagiInversion}
We call a triple $(R, \sT, \ba^t)$ \emph{strongly $F$-regular} if for every $d \in R^{\circ}$, there exists an element $\phi \in \sT_e \cdot \left( F^e_* \ba^{\lceil t(p^e - 1) \rceil} \right)$ such that $1 \in \phi(F^e_* (d) ) = \phi(F^e_* (dR) )$ (here $(d) = dR$ is the ideal generated by $d$).
For the definition for pairs $(R, \sT)$, set $\ba = R$.
\end{definition}

Note that the element $\phi$ from the above two definitions is not necessarily an element of the form $c \phi_e$ where $c \in F^e_* \ba^{\lceil t(p^e - 1) \rceil}$ and $\phi_e \in \sT_e$.  In general it is a sum of such elements.

\begin{remark}
It is clear that a strongly $F$-regular triple is sharply $F$-pure.  Furthermore, if $R$ is regular, $\ba = R$ and $\sT = \sC(R)$ then $(R, \sT, \ba^t)$ is strongly $F$-regular by the original definition of Hochster and Huneke.
\end{remark}

\begin{remark}
\label{RemIf1InThen1InAgain}
If $\phi \in \sT_e \cdot \left(F^e_* \ba^{\lceil t(p^e - 1) \rceil} \right)$ is such that $1 \in \phi(F^e_* (dR))$.  Then $1 \in \phi^n(F^{ne}_* (dR))$ for all $n > 0$.  This is because we have the containments:
\[
\begin{split}
1 \in \phi(F^e_* (dR) ) \\
\subseteq \phi(F^e_* \phi(F^e_* (dR) )) \\
= \phi^2(F^{2e}_* (dR) ) \\
\subseteq \ldots .
\end{split}
\]
Note that $\phi^n \in \sT_{ne} \cdot \left(F^{ne}_* \ba^{\lceil t(p^{ne} - 1) \rceil} \right)$.
\end{remark}

\begin{definition} \cite{MehtaRamanathanFrobeniusSplittingAndCohomologyVanishing}, \cite{SchwedeCentersOfFPurity}
Given a triple $(R, \sT, \ba^t)$, we say that an ideal $J \subseteq R$ is \emph{uniformly $(\sT, \ba^t, F)$-compatible} if for all $e \geq 0$ and all homogeneous $\phi \in \sT_e$, we have that $\phi(F^e_* \ba^{\lceil t(p^e - 1) \rceil} J) \subseteq J$.  Equivalently, for all $e \geq 0$ and all $\phi \in \sT_e \cdot \left(F^e_* \ba^{\lceil t(p^e - 1) \rceil} \right)$, we can require that $\phi(F^e_* J) \subseteq J$.  Another equivalent definition would be to require that for all $e \geq 0$, all $\phi \in F^e_* \sT_e$ and all $a \in \ba^{\lceil t(p^e - 1) \rceil}$, we have that $\phi(F^e_* aJ) \subseteq J$.
For the definition for pairs $(R, \sT)$, set $\ba = R$.
\end{definition}

\begin{definition} \cite{HochsterHunekeTC1}, \cite{LyubeznikSmithCommutationOfTestIdealWithLocalization}, \cite{HochsterFoundations}, \cite{SchwedeCentersOfFPurity}
\label{DefnBigTestIdeal}
The \emph{big test ideal $\tau_b(R; \sT, \ba^t)$} of a triple $(R, \sT, \ba^t)$, if it exists, is the unique smallest ideal $J$ that satisfies two conditions:
\begin{itemize}
\item[(1)]  $J$ is uniformly $(\sT, \ba^t, F)$-compatible, and
\item[(2)]  $J \cap R^{\circ} \neq \emptyset$.
\end{itemize}
The \emph{big test ideal $\tau_b(R; \sT)$} of a pair $(R, \sT)$ is defined by setting $\ba = R$.
\end{definition}

\begin{remark}
 For a triple $(X, \sT, \ba^t)$ constructed from $(X, \Delta, \ba^t)$ as in Remark \ref{RemCanConstructNewAlgebras}, it is obvious from the definition and from \cite[Lemma 2.1]{HaraTakagiOnAGeneralizationOfTestIdeals} or \cite[Theorem 6.3]{SchwedeCentersOfFPurity} that the test ideal of $(X, \Delta, \ba^t)$ agrees with the test ideal of $(X, \sT, \ba^t)$ where $\sT$ is the algebra constructed from the pair $(X, \Delta)$ as above.

Furthermore, given a triple $(R, \sT, \ba^t)$, one can likewise construct a pair $(R, \sT')$ which has the same test ideal as the triple $(R, \sT, \ba^t)$.  Simply define $\sT'_i := \sT_i \cdot \left(F^i_* \ba^{\lceil t(p^i - 1) \rceil} \right)$.
\end{remark}

We will show that the big test ideal exists under the assumption that $(R, \sT)$ is \reduced.

\begin{theorem}
\label{ThmTestIdealExists}
Suppose that $(R, \sT)$ is a \reduced{} pair (respectively, that $(R, \sT, \ba^t)$ is a \reduced{} triple).  Then the big test ideal $\tau_b(R; \sT)$ (respectively $\tau_b(R; \sT, \ba^t)$) exists.
\end{theorem}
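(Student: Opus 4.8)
The plan is to construct the big test ideal explicitly as a sum of images of $\sT$-modules twisted by the $\ba$-data, and then verify it satisfies the two defining conditions of Definition \ref{DefnBigTestIdeal}. Concretely, after choosing a \reduced{} homogeneous element $\phi \in \sT_{e_0}$ (for the triple case, arranging by the last remark before the statement that $\Supp(\phi) \subseteq V(c)$ for some $c \in \ba \cap R^\circ$), I would first produce a single element $c \in R^\circ$ that behaves like a test element: an element $c$ such that for every $d \in R^\circ$ there exists $e > 0$ and $\psi \in \sT_e \cdot (F^e_* \ba^{\lceil t(p^e-1)\rceil})$ with $c \in \psi(F^e_* dR)$. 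This is the heart of the matter and is exactly the content flagged in the introduction as Proposition \ref{PropUniformExistenceOfTestElements}; so I would invoke that. (Absent that, one builds $c$ by a standard Hochster--Huneke style argument: localize at the generic points, use that $R$ is reduced and $F$-finite to split Frobenius generically, multiply the splitting by a single element clearing denominators over all minimal primes, and use the reduced element $\phi$ to carry this into $\sT$.)

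Next I would define
\[
\tau_b(R;\sT,\ba^t) \;:=\; \sum_{e \geq 0}\ \sum_{\psi \in \sT_e \cdot (F^e_* \ba^{\lceil t(p^e-1)\rceil})} \psi\bigl(F^e_*(cR)\bigr),
\]
or, what amounts to the same thing, $\sum_{e,\psi}\Image(\rho_{e,0}$ restricted to the appropriate submodule$)$ applied to $c$ — here I would use the identity from Remark \ref{RemarkNaturalMapsToR} that the image of an $F^e_* R$-submodule of $\sC(R)_e$ under evaluation-at-$1$ equals the sum of images of its generators, so this ideal is finitely generated in each degree and the construction is unambiguous. I would then check:
(i) this ideal is uniformly $(\sT,\ba^t,F)$-compatible — this follows formally because applying a homogeneous $\varphi \in \sT_d$ (times $F^d_*\ba^{\lceil t(p^d-1)\rceil}$) to a term $\psi(F^e_*(cR))$ gives $(\varphi\cdot F^d_*\psi)(F^{d+e}_*(cR))$, and $\varphi \cdot F^d_*\psi$ again lies in $\sT_{d+e}\cdot(F^{d+e}_*\ba^{\lceil t(p^{d+e}-1)\rceil})$ by the subalgebra property together with the inequality $p^d\lceil t(p^e-1)\rceil + \lceil t(p^d-1)\rceil \geq \lceil t(p^{d+e}-1)\rceil$ (as in Remark \ref{RemCanConstructNewAlgebras}(1)--(2)); and (ii) it meets $R^\circ$, since the $e=0$ term already contains $c \in R^\circ$.

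Finally I would prove minimality: if $J$ is any ideal satisfying (1) and (2), pick $d \in J \cap R^\circ$; by the test-element property of $c$, for each generator we have $c \in \psi(F^e_*(dR)) \subseteq \psi(F^e_* J) \subseteq J$ using compatibility of $J$; hence $c \in J$, and then applying all $\psi \in \sT_e\cdot(F^e_*\ba^{\lceil t(p^e-1)\rceil})$ to $F^e_*(cR) \subseteq F^e_* J$ and using compatibility of $J$ again shows every generator of our candidate ideal lies in $J$, so $\tau_b(R;\sT,\ba^t) \subseteq J$. The pair case is the specialization $\ba = R$. The main obstacle is unquestionably step (i) of the construction, the uniform existence of the single element $c$ that is simultaneously a (big, sharp) test element witnessing the test property across all degrees $e$ and all choices of $\psi$; once that element is in hand, compatibility and minimality are formal manipulations with the composition law on $\sC(R)$ and the ceiling inequalities already recorded in Remark \ref{RemCanConstructNewAlgebras}.
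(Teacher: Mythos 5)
Your overall skeleton matches the paper's argument: define the candidate ideal as $\sum_{e}\sum_{\psi}\psi(F^e_*(c)\ba^{\lceil t(p^e-1)\rceil})$ for a suitable $c$, check uniform $(\sT,\ba^t,F)$-compatibility via the composition law and the inequality $p^d\lceil t(p^e-1)\rceil+\lceil t(p^d-1)\rceil\geq\lceil t(p^{d+e}-1)\rceil$, and prove minimality from the test-element property of $c$; that part of your write-up is correct and is essentially what the paper does. The gap is precisely where you admit it lies: the production of the single element $c$. Invoking Proposition \ref{PropUniformExistenceOfTestElements} does not work, and is in fact circular: that proposition concerns a different issue (one element working simultaneously for the whole family of subalgebras $R\langle\phi_{i_1,\ldots,i_n}\rangle$), and it takes as a \emph{hypothesis} that a \homgenbs{} test element $c_i$ already exists for each $(R,\sS^i,\ba^t)$ --- that is, it presupposes exactly the existence statement this theorem needs, which the paper establishes separately in Proposition \ref{PropHDBSTestEltsExist} via Lemma \ref{InitialChoiceOfC}.

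Your fallback sketch (localize at generic points, split Frobenius generically, clear denominators) also misses the two actual difficulties. First, the splitting must be realized \emph{inside} $\sT$, which may be a small subalgebra not stable under the degree-lowering maps $\rho_{i,j}$ of Remark \ref{RemarkNaturalMapsToR}; the paper handles this by choosing $c$ with $R_c$ regular and $\vlocus(\phi)\subseteq V(c)$ for the \reduced{} element $\phi\in\sT_e$, so that $\overline{\phi}^n$ generates $\Hom_{R_c}(F^{ne}_*R_c,R_c)$ and every local splitting is a premultiple of a power of $\phi$. Second, and more seriously, clearing denominators only yields $c^{m_d}\in\phi^{n_d}(F^{n_de}_*(d)\ba^{\lceil t(p^{n_de}-1)\rceil})$ with the exponent $m_d$ depending on $d$; to obtain one fixed element working for \emph{every} $d\in R^{\circ}$ one needs the bootstrapping step of Proposition \ref{PropHDBSTestEltsExist}, which shows $c^{2m_1}\in\phi^{nn_1}(F^{nn_1e}_*\ba^{\lceil t(p^{nn_1e}-1)\rceil})$ for all $n$ and then absorbs $c^{m_d}$ (after enlarging $m_d$ to $p^{nn_1e}m_1$) to conclude that the fixed power $c^{3m_1}$ is a \homgenbs{} test element. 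Without this uniformization step your candidate ideal is not well-founded (there is no single $c$ to sum over), so the proof as written is incomplete at its central point, even though everything downstream of the test element is fine.
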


In order to prove this we need several preliminary results.  But first we give a definition.

\begin{definition}
 Suppose that $(R, \sT, \ba^t)$ is a \reduced{} triple.  An element $c \in \ba \cap R^{\circ}$ is called a \emph{\homgen{} big sharp test element} (or simply a \emph{\homgenbs{} test element}) for the triple $(R, \sT, \ba^t)$ if for every $d \in R^{\circ}$ there exists a $\phi \in \sT_i$, $i > 0$ such that $c \in \phi(F^i_* \ba^{\lceil t(p^i - 1) \rceil} (d))$.  One can also make an analogous definition for pairs.
\end{definition}

\begin{remark}
 If $c$ is a \homgenbs{} test element, then for any $c' \in R^{\circ}$, $c' c$ is also a \homgenbs{} test element.
\end{remark}

Once we prove that \homgenbs{} test elements exist, Theorem \ref{ThmTestIdealExists} follows quickly.

\begin{proof}[Proof of Theorem \ref{ThmTestIdealExists} modulo \homgenbs{} test elements]
 Suppose that $c \in \ba \cap R^{\circ}$ is a \homgenbs{} test element for a \reduced{} triple $(R, \sT, \ba^t)$, and that $J$ is uniformly $(\sT, \ba^t, F)$-compatible with $J \cap R^{\circ} \neq \emptyset$.  Then clearly $c \in J$.  On the other hand, it is easy to see that the sum
\[
 I := \sum_{e \geq 0} \sum_{\phi \in \sT_e} \phi(F^e_* \ba^{t(p^e - 1)} (c)) \subseteq J
\]
is the smallest uniformly $F$-compatible ideal containing $c$.  Therefore we obtain that $I = \tau_b(R; \sT, \ba^t)$.
\end{proof}

Therefore, we will prove that a \homgenbs{} test element exists.  The idea is exactly the same as the usual construction of test elements, see \cite[Section 6]{HochsterHunekeTC1}, \cite[Lemma 2.5]{TakagiInterpretationOfMultiplierIdeals}, \cite{HochsterFoundations}, \cite[Section 6]{SchwedeFAdjunction}.

\begin{proposition}
\label{PropHDBSTestEltsExist}
Suppose that $(R, \sT, \ba^t)$ is a \reduced{} triple with \reduced{} homogeneous element $\phi \in \sT_e$.  Then there exists a \homgenbs{} test element for $(R, \sT, \ba^t)$.
\end{proposition}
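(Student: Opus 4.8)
The plan is to mimic the classical Hochster--Huneke construction of test elements, adapted to the generality of triples $(R, \sT, \ba^t)$. The key tool is the \reduced{} homogeneous element $\phi \in \sT_e$: since $(R, \sT, \ba^t)$ is \reduced, we can pick $c_0 \in \ba \cap R^{\circ}$ with $\Supp(\phi) \subseteq V(c_0)$, so that after localizing at $c_0$ the element $\phi$ generates $\Hom_{R_{c_0}}(F^e_* R_{c_0}, R_{c_0})$ as an $F^e_* R_{c_0}$-module. First I would reduce to showing: there exists $c \in \ba \cap R^{\circ}$ such that for all $d \in R^{\circ}$, there is some $j > 0$ with $c \in \phi^j(F^{je}_* \ba^{\lceil t(p^{je}-1)\rceil}(d))$ — i.e.\ we only need to produce a test element ``with respect to the single map $\phi$ and its powers,'' and then the full \homgenbs{} condition (over all homogeneous elements of $\sT$) follows a fortiori since $\phi^j \in \sT_{je}\cdot(F^{je}_*\ba^{\lceil t(p^{je}-1)\rceil})$.

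Next, the heart of the argument. Let $c_0$ be as above. The strategy is the standard one: because $R_{c_0}$ is regular (or more precisely, because $\phi$ generates $\Hom$ over $R_{c_0}$, which makes $(R_{c_0}, \sT_{c_0})$ strongly $F$-regular in the sense defined in the excerpt), for any fixed $d \in R^{\circ}$ we have $1 \in \phi(F^e_* (d R_{c_0}))$ after inverting $c_0$; clearing denominators gives some power $c_0^N \in \phi(F^e_*(dR))$ inside $R$ itself — but this $N$ depends on $d$. To get uniformity, I would use a Noetherian/generic-freeness argument: the surjectivity ``$\phi$ generates'' spreads out, so there is a single large power, call it $c := c_0^{N}$ for suitable $N$, together with the absorption of $\ba$-factors, such that $c \in \phi(F^e_*(\ba^{\lceil t(p^e-1)\rceil})\cdot(d))$ holds whenever $d$ is a unit times a power of $c_0$ — and then, for general $d \in R^{\circ}$, one writes an inclusion after localizing and bootstraps. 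More concretely: fix $d \in R^{\circ}$; since $c_0 \in R^{\circ}$ and $V(d) \not\supseteq V(c_0)$ in general we may not directly compare, so instead invert $d$ — in $R_d$, since $d$ is a unit and $\phi$ is \reduced{} (hence $\Supp(\phi)$ is nowhere dense) one shows $c_0^m \in \phi(F^e_*(\ba^{\lceil\cdot\rceil}(d)))$ for some $m = m(d)$; then iterate using Remark \ref{RemIf1InThen1InAgain}-style containments $\phi(F^e_* \ba^{\cdots}(-)) \subseteq \phi^2(F^{2e}_* \ba^{\cdots}(-)) \subseteq \cdots$ to absorb the dependence of $m$ on $d$ into a fixed power of $c_0$, by exploiting that $c_0^{p^e}$-multiples get pulled back through $\phi$.

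The main obstacle — and the step requiring the most care — is precisely the \emph{uniformity} in $d$: producing one element $c$ that works simultaneously for all $d \in R^{\circ}$, rather than a $c(d)$ depending on $d$. In the classical setting this is handled by the fact that $R_{c_0}$ is regular and one uses a fixed finite presentation; here I would argue as follows. Choose generators $y_1, \dots, y_r$ of $F^e_* R$ over $R$. Over $R_{c_0}$, write $1 = \phi(\sum_i a_i y_i)$ with $a_i \in R_{c_0}$; clear denominators to get $c_0^{k} = \phi(\sum_i (c_0^{k} a_i) y_i)$ with all $c_0^k a_i \in R$ — this gives a \emph{single} $k$ independent of $d$. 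Now for arbitrary $d \in R^{\circ}$: since $R$ is reduced and $F^e_*$ spreads things out, $d$ (or a suitable power $d^{p^e}$) divides enough of the relevant elements that one can factor $c_0^{k}$ through $F^e_*(dR)$ up to a further fixed power of $c_0$; the key identity is that if $c_0^{k} = \phi(F^e_* z)$ and $z \in (d^{[p^e]})$, i.e.\ $z = d^{p^e} w$, then $c_0^{k} = \phi(F^e_*(d^{p^e} w)) = d\cdot\phi(F^e_* w)$ is already visibly in the image of $F^e_*(dR)$-type expressions after one applies $\phi$ once more. Iterating $\phi$ and absorbing the $\ba^{\lceil t(p^{je}-1)\rceil}$-factors (using $b^{p^j}b' \in \ba^{p^{i+j}-1}$ as in Remark \ref{RemCanConstructNewAlgebras}(1)), one lands a fixed power $c := c_0^{M}$ (times an element of $\ba$, to ensure $c \in \ba \cap R^{\circ}$) in $\phi^j(F^{je}_* \ba^{\lceil t(p^{je}-1)\rceil}(d))$ for a $j$ depending on $d$ but with $M$ fixed. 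This $c$ is the desired \homgenbs{} test element, completing the proof.
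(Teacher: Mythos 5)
Your overall strategy is the same as the paper's (choose $c_0 \in \ba \cap R^{\circ}$ with $R_{c_0}$ regular and $\vlocus(\phi) \subseteq V(c_0)$, clear denominators, then bootstrap to a fixed power of $c_0$ that works for every $d$), but the execution of the crucial uniformity step has genuine gaps. First, your concrete intermediate claims stay at the fixed level $e$: you assert that for each $d \in R^{\circ}$ one gets $c_0^{m(d)} \in \phi(F^e_* \ba^{\lceil t(p^e-1)\rceil}(d))$ from a \emph{single} application of $\phi$, obtained by ``inverting $d$.''  That is false in general (if $d$ lies deep in a maximal ideal $\bm$ of a regular local ring, then $\phi(F^e_*(dR)) \subseteq \bm$ once $d \in \bm^N$ for $N \gg 0$), and localizing at $d$ is the wrong localization anyway: $\phi$ need not generate $\Hom_{R_d}(F^e_* R_d, R_d)$ and $R_d$ need not be strongly $F$-regular.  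The correct statement, which the paper isolates as Lemma \ref{InitialChoiceOfC}, comes from localizing at $c_0$ and using strong $F$-regularity of $R_{c_0}$: it gives $c_0^{m_d} \in \phi^{n_d}(F^{n_d e}_* (d)\ba^{\lceil t(p^{n_d e}-1)\rceil})$ where \emph{both} $m_d$ and the power $n_d$ of $\phi$ depend on $d$.  (Also, generating $\Hom$ over $R_{c_0}$ alone does not make the localized pair strongly $F$-regular; you need $R_{c_0}$ itself regular or strongly $F$-regular, which your choice of $c_0$ can ensure but which should be said.)

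Second, your mechanism for killing the dependence on $d$ --- assuming the fixed element $z$ with $c_0^k = \phi(F^e_* z)$ satisfies $z \in (d^{p^e})$ --- is unjustified: a single fixed $z$ is certainly not divisible by $d^{p^e}$ for every $d \in R^{\circ}$, and reducedness of $R$ produces no such divisibility, so this shortcut fails.  The step that actually closes the gap (and is the heart of the paper's proof) is different: from the case $d=1$ one obtains $c^{m_1} \in \phi^{n_1}(F^{n_1 e}_* \ba^{\lceil t(p^{n_1 e}-1)\rceil})$ and, by self-composition, the self-reproducing containment $c^{2m_1} \in \phi^{n n_1}(F^{n n_1 e}_* (c^{2m_1})\, \ba^{\lceil t(p^{n n_1 e}-1)\rceil})$ for \emph{every} $n > 0$; then, for a given $d$, one chooses $n$ so large that $p^{n n_1 e} m_1 \geq m_d$ and computes $c^{3m_1} = c^{m_1} c^{2m_1} \in \phi^{n n_1}\bigl(F^{n n_1 e}_* (c^{p^{n n_1 e} m_1})\ba^{\lceil \cdot \rceil}\bigr) \subseteq \phi^{n n_1}\bigl(F^{n n_1 e}_* (c^{m_d})\ba^{\lceil \cdot \rceil}\bigr) \subseteq \phi^{n n_1 + n_d}\bigl(F^{(n n_1 + n_d) e}_* (d)\ba^{\lceil \cdot \rceil}\bigr)$, so the fixed element $c^{3m_1}$ is a \homgenbs{} test element.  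Your proposal gestures at ``iterating $\phi$ to absorb the dependence of $m$ on $d$,'' but never supplies this comparison of exponents at varying levels $n n_1 e$; without it (and with the invalid divisibility claim in its place) the argument does not go through.
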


We first need a lemma.

\begin{lemma}
\label{InitialChoiceOfC}
Assuming the hypotheses of Proposition \ref{PropHDBSTestEltsExist}.  Suppose that $c \in \ba \cap R^{\circ}$ is such that $R_c$ is strongly regular (for example, if $R_c$ regular) and $$\vlocus(\phi) \subseteq V(c) \subseteq X = \Spec R.$$  Then $(R_c, \sT_c, \ba_c^t)$ is strongly $F$-regular.  Furthermore, for every $d \in R^{\circ}$, there exists positive integers $m,n > 0$ such that $c^m \in \phi^n(F^{ne}_* (d) \ba^{\lceil t(p^{ne} - 1) \rceil})$.
\end{lemma}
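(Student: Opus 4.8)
The statement has two parts: first that $(R_c, \sT_c, \ba_c^t)$ is strongly $F$-regular, and second the more quantitative claim about $c^m \in \phi^n(F^{ne}_*(d)\ba^{\lceil t(p^{ne}-1)\rceil})$ for arbitrary $d \in R^{\circ}$. I would prove the first part and then bootstrap the second from it by clearing denominators.

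\emph{Step 1: Strong $F$-regularity after inverting $c$.} The key observation is that $\phi \in \sT_e$ is \reduced{} and $\vlocus(\phi) \subseteq V(c)$, so after localizing at $c$ the element $\phi$ \emph{generates} $\Hom_{R_c}(F^e_* R_c, R_c)$ as an $F^e_* R_c$-module; in other words the localized algebra $(\sT_c)_{\geq e}$ together with the unit contains all of $\sC(R_c)$ in degrees that are multiples of $e$ (up to multiplication by $F^e_* R_c$). So it suffices to check that the triple $(R_c, \sC(R_c)^{(e)}, \ba_c^t)$ is strongly $F$-regular, where $\sC(R_c)^{(e)}$ denotes the Veronese-type subalgebra generated in degree $e$. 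But $R_c$ is strongly regular (a regular ring localized, or assumed so), hence $(R_c, \sC(R_c))$ is strongly $F$-regular by the Hochster--Huneke criterion; and since $\ba_c = R_c$ (because $c \in \ba$, so $\ba_c = R_c$), the $\ba^t$-term is trivial. The only mild point is that I'm restricting to the subalgebra generated by $\phi$ in degrees $\equiv 0 \pmod e$; this is harmless because strong $F$-regularity for $\sC(R_c)$ means for every $d$ there is \emph{some} splitting-type map, and one can always take its degree to be a multiple of $e$ by composing with itself (using Remark \ref{RemIf1InThen1InAgain}). So $(R_c, \sT_c, \ba_c^t)$ is strongly $F$-regular.

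\emph{Step 2: From strong $F$-regularity of $R_c$ to the quantitative statement over $R$.} Fix $d \in R^{\circ}$. By Step 1 and the definition of strong $F$-regularity applied over $R_c$, there exists some $\psi \in (\sT_c)_i \cdot (F^i_* \ba_c^{\lceil t(p^i-1)\rceil})$ with $1 \in \psi(F^i_*(dR_c))$. Using that $\phi$ generates after inverting $c$, I can write $\psi$ (after possibly increasing $i$ to a multiple of $e$ and using Remark \ref{RemIf1InThen1InAgain} to replace $\psi$ by a power of $\phi$ composed appropriately) in terms of $\phi^n$: there is $n$ and some $x \in F^{ne}_* R_c$ with $(\phi^n \cdot x)(F^{ne}_*(dR_c)) \ni 1$, where $\phi^n \in \sT_{ne} \cdot (F^{ne}_* \ba^{\lceil t(p^{ne}-1)\rceil})$ after absorbing the $\ba$-factors. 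Now clear denominators: $x = x_0 / c^k$ for some $x_0 \in F^{ne}_* R$ and $k \geq 0$, and the statement $1 \in (\phi^n \cdot x)(F^{ne}_*(dR))_c$ means, over $R$, that some power $c^m$ of $c$ lies in $\phi^n(F^{ne}_*(d)\ba^{\lceil t(p^{ne}-1)\rceil})$ (the power $c^m$ accounts both for the denominator in $x$ and for clearing the localization). This is precisely the desired conclusion.

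\emph{Main obstacle.} The genuinely delicate point is keeping track of the bookkeeping in Step 2: the definition of strong $F$-regularity produces a map $\psi$ that is a \emph{sum} of terms of the form $c' \phi_i$ (as the remark after the strong $F$-regularity definition warns), not a single such term, and similarly $\phi^n$ lands in $\sT_{ne} \cdot (F^{ne}_*\ba^{\lceil t(p^{ne}-1)\rceil})$ only after using the subalgebra/ideal-system compatibilities from Remark \ref{RemCanConstructNewAlgebras}. One must verify that when $\phi$ generates $\sC(R_c)_e$ over $F^e_* R_c$, every degree-$ne$ element of $\sT_c$ that splits $d$ can be replaced — at the cost of multiplying the target element $1$ by a power of $c$, hence getting $c^m$ on the nose — by the single element $\phi^n$ times something in $F^{ne}_* R$. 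I expect this to work cleanly using the $\Image(\rho_{i,0}|_{\sB}) = \sum \Image(\phi_i)$ identity from Remark \ref{RemarkNaturalMapsToR} applied to the submodule generated by $\phi^n$, together with the standard "clearing denominators" manipulation, but it is the step that requires care.
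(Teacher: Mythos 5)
Your route is essentially the paper's: localize at $c$, use that $R_c$ is strongly $F$-regular and that $\overline\phi$ generates $\Hom_{R_c}(F^e_* R_c, R_c)$ as an $F^e_* R_c$-module, replace the splitting map produced by strong $F$-regularity with a power of $\phi$ (premultiplied by something), and clear denominators; the bookkeeping with $\ba_c = R_c$ and absorbing powers of $c$ into $\ba^{\lceil t(p^{ne}-1)\rceil}$ is fine. However, there is a genuine gap at the one point that actually requires an argument. In Step 1 you pass from ``$\overline\phi$ generates $\sC(R_c)_e$'' to ``$\sT_c$ contains all of $\sC(R_c)$ in degrees divisible by $e$, up to $F^{e}_* R_c$-multiples'' with the phrase ``in other words,'' and in Step 2 you propose to justify the substitution of an arbitrary splitting $\psi \in \sC(R_c)_{ne}$ by $\overline\phi^{\,n}\cdot x$ using Remark \ref{RemIf1InThen1InAgain} and the identity $\Image(\rho_{i,0}|_{\sB})=\sum_j\Image(\phi_j)$ from Remark \ref{RemarkNaturalMapsToR}. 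Neither delivers this. Remark \ref{RemIf1InThen1InAgain} only lets you self-compose a map that \emph{already} splits $d$ (so it can adjust the degree of $\psi$ to a multiple of $e$, nothing more), and the image identity concerns evaluation at $1$, not generation of $\Hom_{R_c}(F^{ne}_* R_c, R_c)$ as an $F^{ne}_* R_c$-module.

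What is actually needed is that $\overline{\phi^{\,n}}$ generates $\Hom_{R_c}(F^{ne}_* R_c, R_c)$ as an $F^{ne}_* R_c$-module, i.e.\ that a composition of generators of these $\Hom$-modules is again a generator. This is not a formal consequence of generation in degree $e$ (the multiplication maps $\sC(R_c)_e \cdot \sC(R_c)_{(n-1)e} \to \sC(R_c)_{ne}$ need not be surjective for a general $F$-finite ring), and it is exactly where the paper's proof invokes the regularity of $R_c$ together with \cite[Lemma 3.8, Corollary 3.9]{SchwedeFAdjunction}. Once that fact is supplied, both your Step 1 (strong $F$-regularity of $(R_c,\sT_c,\ba_c^t)$) and Step 2 (writing $\psi=\overline\phi^{\,n}\cdot x$ and clearing denominators to get $c^m \in \phi^n(F^{ne}_*(d)\ba^{\lceil t(p^{ne}-1)\rceil})$) go through and coincide with the paper's argument.
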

\begin{proof}
We see that $\overline{\phi}$ generates $\Hom_{R_c}(F^e_* R_c, R_c)$ as an $F^e_* R_c$-module by the hypothesis about the degeneracy locus of $\phi$.  This then implies that $\overline{\phi^n}$ generates $\Hom_{R_c}(F^{ne}_* R_c, R_c)$ as an $F^{ne}_* R_c$-module (since $R_c$ is regular, see \cite[Lemma 3.8, Corollary 3.9]{SchwedeFAdjunction}).   In other words, $(\sT_c)_{ne} = (\sC(R_c))_{ne}$.  Therefore, since $R_c$ is strongly $F$-regular, there exists some $n > 0$ and $\psi \in \Hom_{R_c}(F^{ne}_* R_c, R_c)$ such that $1 \in \psi(F^{ne}_* (\overline d) \ba_c^{\lceil t(p^{ne} - 1) \rceil})$ (note $\ba_c = R$).  But then
\[
1 \in \overline{\phi}^n(F^{ne}_* (\overline d) \ba_c^{\lceil t(p^{ne} - 1) \rceil})
\]
as desired since $\psi$ is obtained from $\overline \phi^n$ by pre-multiplication by an element of $R$.

By clearing denominators, the second result is obtained.
\end{proof}

\begin{proof}[Proof of Proposition \ref{PropHDBSTestEltsExist}]
Fix notation as in the statement of Lemma \ref{InitialChoiceOfC}.  In particular, choose some \reduced{}  $\phi$.  Choose $c$ as in Lemma \ref{InitialChoiceOfC}.  Setting $d = 1$ in Lemma \ref{InitialChoiceOfC}, we see that there exists some positive integers $m_1, n_1$ so that $c^{m_1} \in \phi^{n_1}(F^{n_1 e}_* (1) \ba^{\lceil t(p^{n_1 e} - 1) \rceil})$.  Furthermore, we have
\[
\begin{split}
c^{2m_1} \in c^{m_1}  \phi^{n_1}\left(F^{n_1 e}_* (1) \ba^{\lceil t(p^{n_1 e} - 1) \rceil}\right)\\
= \phi^{n_1}\left(F^{n_1 e}_* (c^{p^{n_1 e} m_1}) \ba^{\lceil t(p^{n_1 e} - 1) \rceil}\right)\\
\subseteq \phi^{n_1}\left(F^{n_1 e}_* (c^{2m_1}) \ba^{\lceil t(p^{n_1 e} - 1) \rceil}\right)\\
\subseteq \phi^{n_1}\left(F^{n_1 e}_* \ba^{\lceil t(p^{n_1 e} - 1) \rceil} c^{m_1}  \phi^{n_1}\left(F^{n_1 e}_* (1) \ba^{\lceil t(p^{n_1 e} - 1) \rceil}\right) \right)\\
\subseteq \phi^{2 n_1}\left(F^{2 n_1 e}_* (c^{2m_1}) \ba^{\lceil t(p^{2n_1 e} - 1) \rceil} \right) \\
\subseteq \ldots \subseteq \phi^{n n_1}\left(F^{n n_1 e}_* (c^{2m_1}) \ba^{\lceil t(p^{n n_1 e} - 1) \rceil} \right)
\end{split}
\]
for every integer $n > 0$.  Therefore $c^{2m_1} \in \phi^{n n_1}\left(F^{n n_1 e}_* \ba^{\lceil t(p^{n n_1 e} - 1) \rceil} \right)$ for every integer $n > 0$.  We will show that $c^{3m_1}$ is a \homgenbs{} test element.

Now fix any $d \in R^{\circ}$.  Again by Lemma \ref{InitialChoiceOfC}, we can find some positive integers $m_d, n_d$ such that $c^{m_d} \in \phi^{n_d}(F^{n_d e}_* (d) \ba^{\lceil t(p^{n_d e} - 1) \rceil})$.  If $m_d$ is less than $3m_1$, then we are done.  Otherwise, we may assume that $m_d = p^{nn_1e} m_1$ for some $n > 0$ (since making $m_d$ larger is harmless).  But then
\[
\begin{split}
c^{3m_1} \in c^{m_1} \phi^{n n_1}\left(F^{n n_1 e}_* \ba^{\lceil t(p^{n n_1 e} - 1) \rceil} \right)\\
= \phi^{n n_1}\left(F^{n n_1 e}_*  c^{p^{nn_1e} m_1} \ba^{\lceil t(p^{n n_1 e} - 1) \rceil} \right) \\
\subseteq \phi^{n n_1}\left(F^{n n_1 e}_* \ba^{\lceil t(p^{n n_1 e} - 1) \rceil} \phi^{n_d}(F^{n_d e}_* (d) \ba^{\lceil t(p^{n_d e} - 1) \rceil}) \right) \\
\subseteq \phi^{n n_1 + n_d}\left( F^{n n_1 e + n_d e}_* (d) \ba^{\lceil t(p^{n n_1 e + n_d e} - 1) \rceil} \right)
\end{split}
\]
which completes the proof.
\end{proof}

We now list some basic properties of test ideals.

\begin{proposition}
Suppose that $(R, \sT, \ba^t)$ is a \reduced{} triple.  Further suppose that $\sT' \subseteq \sT$ is a graded subalgebra such that the triple $(R, \sT', \ba^t)$ is also \reduced{}.  Then the following hold:
\begin{itemize}
\item[(i)]  $\tau_b(R; \sT, \ba^t) \supseteq \tau_b(R; \sT', \ba^t)$
\item[(ii)] For any multiplicative set $W$, we have
\[
W^{-1} \tau_b(R; \sT, \ba^t) = \tau_b(W^{-1}R; W^{-1}\sT, W^{-1}\ba^t).
\]
\item[(iii)]  $(R, \sT, \ba^t)$ is strongly $F$-regular if and only if $\tau_b(R; \sT, \ba^t) = R$.
\end{itemize}
\end{proposition}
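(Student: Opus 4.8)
The plan is to run everything through the explicit description of the big test ideal from the proof of Theorem~\ref{ThmTestIdealExists}: if $c\in\ba\cap R^\circ$ is a \homgenbs{} test element for the \reduced{} triple $(R,\sT,\ba^t)$ (one exists by Proposition~\ref{PropHDBSTestEltsExist}), then
\[
\tau_b(R;\sT,\ba^t)=\sum_{e\ge 0}\ \sum_{\phi\in\sT_e}\phi\!\left(F^e_*\ba^{\lceil t(p^e-1)\rceil}(c)\right),
\]
and, for any $d\in R^\circ$, the ideal $I_d:=\sum_{e\ge 0}\sum_{\phi\in\sT_e}\phi(F^e_*\ba^{\lceil t(p^e-1)\rceil}(d))$ is the smallest uniformly $(\sT,\ba^t,F)$-compatible ideal containing $d$. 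Part (i) is then immediate: since $\sT'\subseteq\sT$, every uniformly $(\sT,\ba^t,F)$-compatible ideal is in particular uniformly $(\sT',\ba^t,F)$-compatible, so $\tau_b(R;\sT,\ba^t)$ is a uniformly $(\sT',\ba^t,F)$-compatible ideal meeting $R^\circ$ and hence contains the smallest such ideal, namely $\tau_b(R;\sT',\ba^t)$.

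For (ii), the inclusion $\tau_b(W^{-1}R;W^{-1}\sT,W^{-1}\ba^t)\subseteq W^{-1}\tau_b(R;\sT,\ba^t)$ is the easy one: localizing a uniformly $(\sT,\ba^t,F)$-compatible ideal yields a uniformly $(W^{-1}\sT,W^{-1}\ba^t,F)$-compatible ideal (using $W^{-1}\Hom_R(F^e_*R,R)\cong\Hom_{W^{-1}R}(F^e_*W^{-1}R,W^{-1}R)$, as $F^e_*R$ is finitely presented), and $W^{-1}\tau_b(R;\sT,\ba^t)$ meets $(W^{-1}R)^\circ$ unless $W^{-1}R=0$, in which case the statement is trivial. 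For the reverse inclusion I would verify that $c/1$ is a \homgenbs{} test element for the (still \reduced{}) triple $(W^{-1}R,W^{-1}\sT,W^{-1}\ba^t)$; then localizing the displayed formula term by term — localization commutes with images, with $F^e_*(-)$, with $\ba^{\lceil t(p^e-1)\rceil}(c)$, and with the sum, and any element of $(W^{-1}\sT)_e$ has the same image on each submodule as the localization of some element of $\sT_e$ — produces exactly the formula computing $\tau_b(W^{-1}R;W^{-1}\sT,W^{-1}\ba^t)$ from $c/1$, giving both inclusions at once. The one subtlety in the claim is that one must test against every $\bar d\in(W^{-1}R)^\circ$, not only images of elements of $R^\circ$; given $\bar d=d_0/w_0$, let $S$ be the set of minimal primes of $R$ disjoint from $W$ (exactly the minimal primes of $W^{-1}R$), choose $w^\ast\in W\cap\bigcap_{\eta\notin S}\eta$ and $x\in\bigcap_{\eta\in S}\eta\setminus\bigcup_{\eta\notin S}\eta$, and set $d:=d_0w^\ast+x$; then $d\in R^\circ$, while the image of $x$ in $W^{-1}R$ lies in $W^{-1}\big(\bigcap_{\eta\in S}\eta\big)=(0)$, so $(d)W^{-1}R=(\bar d)W^{-1}R$, and one applies the \homgenbs{} property of $c$ to $d$ and localizes.

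For (iii), the direction ``strongly $F$-regular $\Rightarrow\tau_b(R;\sT,\ba^t)=R$'' is short: write $\tau_b=\tau_b(R;\sT,\ba^t)$, pick $d\in\tau_b\cap R^\circ$, and by strong $F$-regularity choose $e$ and $\phi\in\sT_e\cdot(F^e_*\ba^{\lceil t(p^e-1)\rceil})$ with $1\in\phi(F^e_*(d))$; uniform compatibility gives $\phi(F^e_*\tau_b)\subseteq\tau_b$, and $dR\subseteq\tau_b$ then forces $1\in\phi(F^e_*(dR))\subseteq\tau_b$, so $\tau_b=R$. For the converse, assume $\tau_b(R;\sT,\ba^t)=R$. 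First note the formal equivalence, for fixed $d\in R^\circ$ and fixed $e$, between ``$1\in\sum_{\phi\in\sT_e}\phi(F^e_*\ba^{\lceil t(p^e-1)\rceil}(d))$'' and the existence of a single $\phi\in\sT_e\cdot(F^e_*\ba^{\lceil t(p^e-1)\rceil})$ with $\phi(F^e_*(d))\ni 1$: writing $1=\sum_k\psi_k(F^e_*a_k d r_k)$ with $\psi_k\in\sT_e$ and $a_k\in\ba^{\lceil t(p^e-1)\rceil}$, one replaces $a_k$ by $a_kr_k$ (still in the ideal $\ba^{\lceil t(p^e-1)\rceil}$) so as to unify the evaluation point at $F^e_*(d)$, then assembles $\sum_k\psi_k\cdot(F^e_*a_kr_k)$ as in Remark~\ref{RemarkNaturalMapsToR}. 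Since $I_d$ is uniformly compatible and meets $R^\circ$ we have $\tau_b\subseteq I_d$, so $I_d=R$ for every $d\in R^\circ$, and, $I_d$ being the increasing union of its partial sums over $e$, we get $1$ in a finite subsum. Localizing at a maximal ideal $\mathfrak{m}$ and invoking (ii), in the local ring $R_{\mathfrak{m}}$ a sum of proper ideals is proper, so some single graded piece of $(I_d)_{\mathfrak{m}}=R_{\mathfrak{m}}$ is already $R_{\mathfrak{m}}$; by the equivalence above this gives strong $F$-regularity of $(R_{\mathfrak{m}},\sT_{\mathfrak{m}},\ba_{\mathfrak{m}}^t)$ (the passage from $(R_{\mathfrak{m}})^\circ$ back to $R^\circ$ being the same minimal-primes bookkeeping as in (ii)), and one then globalizes.

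The step I expect to be the main obstacle is precisely this last globalization in (iii): converting ``$1$ lies in a finite sum $\sum_i J_{e_i}$ of graded pieces for various Frobenius degrees $e_i$'' into ``$1$ lies in a single $J_E$'', i.e. manufacturing the single map of a single degree demanded by the definition of strong $F$-regularity. Over a local ring this is free, and in general one should combine the local statements with quasi-compactness of $\Spec R$ to reduce to finitely many degrees and then reconcile them by composing with suitable powers of a surjection supplied by \sfpty{} (which $\tau_b=R$ also yields, locally and hence — with the same care — globally); the delicacy is to perform these compositions while keeping all coefficients inside the prescribed ideals $\ba^{\lceil t(p^e-1)\rceil}$ — the purpose of Remark~\ref{RemIf1InThen1InAgain} — and to land every summand in one common degree. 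Parts (i), (ii), and the forward direction of (iii) are, by contrast, routine once the test-element formula of Theorem~\ref{ThmTestIdealExists} is in hand.
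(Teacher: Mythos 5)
Parts (i), (ii), the forward direction of (iii), and the local case of the converse in (iii) are correct and follow essentially the paper's route: your explicit minimal-prime construction in (ii) just reproves the fact the paper quotes (every element of $(W^{-1}R)^{\circ}$ is, up to a unit of $W^{-1}R$, the image of an element of $R^{\circ}$), and your ``formal equivalence'' converting $1\in\sum_{\phi\in\sT_e}\phi(F^e_*\ba^{\lceil t(p^e-1)\rceil}(d))$ into a single element of $\sT_e\cdot(F^e_*\ba^{\lceil t(p^e-1)\rceil})$ is exactly the observation of Remark \ref{RemarkNaturalMapsToR} that the paper also uses.

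The genuine gap is the step you yourself flag and do not carry out: globalizing the converse of (iii), and the mechanism you sketch would not work as stated. First, producing the ``surjection supplied by sharp $F$-purity'' in a \emph{single} degree, globally, is exactly the same gluing-of-degrees problem you are trying to solve, so ``locally and hence --- with the same care --- globally'' is circular. Second, even granted a global surjective $\psi$ of degree $f$, composing the local maps $\phi_j\in\sT_{e_j}$ with powers of $\psi$ only yields degrees $e_j+m_jf$, and these cannot in general be equalized across the cover (take $e_1=2$, $e_2=3$, $f=2$: a parity obstruction). The paper closes the gap differently and more simply: it first absorbs $\ba^t$ into $\sT$ (Remark \ref{RemCanConstructNewAlgebras}), which removes the coefficient bookkeeping you worry about; then for each maximal ideal $\bm$ it uses (ii) and the local case to find $\phi\in\sT_e$ with $\phi(F^e_*\,dR)_{\bm}=R_{\bm}$, spreads this to a basic open set, takes a finite cover $\Spec R_{b_j}$ with maps $\phi_j\in\sT_{e_j}$, and reconciles the degrees by \emph{self-composing} each $\phi_j$ (Remark \ref{RemIf1InThen1InAgain}) --- self-composition multiplies degrees, so all of them can be raised to one common multiple $e$ of the $e_j$, which is precisely what composition with a fixed auxiliary map cannot do. Then $\sum_j\phi_j(F^e_*\,dR)$ is locally the unit ideal, hence equals $R$, and the evaluation-map trick you already proved produces the single required element of $\sT_e\cdot(F^e_*\,dR)$. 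With your sketch replaced by this argument (or any correct degree-equalization), the rest of your write-up matches the paper.
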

\begin{proof}
Part (i) is obvious.  Part (ii) is easy once we observe that a \homgenbs{} test element remains a \homgenbs{} test element after localization.  But this follows since if $W$ is a multiplicative set, every element of $(W^{-1} R)^{\circ}$ can be written as a fraction $r/w$ for $w \in W$ and $r \in R^{\circ}$, see for example \cite[Page 57]{HochsterFoundations}.

The $(\Rightarrow)$ direction of (iii) is obvious.  Thus we prove the $(\Leftarrow)$ direction.  Since we can absorb the $\ba^t$ term into $\sT$ as in Remark \ref{RemCanConstructNewAlgebras}, we may assume that $\ba = R$.  Fix $d \in R^{\circ}$, choose $c$ to be a \homgenbs{} test element and write
\[
R = \tau_b(R; \sT) = \sum_e \sum_{\phi \in \sT_e} \phi(F^e_* (cR)).
\]
First we do the case where $R$ is local with maximal ideal $\bm$.  In that case, since a sum of ideals contained in $\bm$ is still contained in $\bm$, there must be some $e > 0$ and $\phi \in \sT_e$ such that $\phi(F^e_* (cR)) \nsubseteq \bm$ which implies that $1 \in \phi(F^e_* (cR))$.  Thus since for every $d \in R^{\circ}$ we have that $c \in \psi(F^i_* (dR))$ for some $i > 0$ and $\psi \in \sT_i$, we have $1 \in (\phi \cdot \psi)(F^{e+i}_* (dR))$ as desired.

Now assume that $R$ is no longer necessarily local.  Choose some maximal ideal $\bm$.  By (ii) and the above work, we see that there exists some $\phi \in \sT_e$ such that $\phi(F^e_* (dR))_{\bm} = R_{\bm}$.  Thus this also holds in a neighborhood $U = \Spec R_b$ of $\bm$.  We can cover $\Spec R$ by a finite number of such neighborhoods $U_j = \Spec R_{b_j} = \Spec R[b_j^{-1}]$ with associated $\phi_j \in \sT_{e_j}$ such that $\overline \phi_j(F^{e_j}_* dR_{b_j}) = R_{b_j}$.  By replacing $\phi_j$ with self-compositions, and using Remark \ref{RemIf1InThen1InAgain}, we may assume that all the $e_j$ are equal to the same $e$.  Consider $I := \sum_{j = 1}^m \phi_j(F^{e}_* (dR)) \subseteq R$.    Now $I_{b_j} = R_{b_j}$ by construction.  This implies that $I := \sum_{j = 1}^m \phi_j(F^{e}_* (dR)) = R$.

Note that by Remark \ref{RemarkNaturalMapsToR}, there is a natural map $\sT_e \rightarrow R$ (the evaluation map).  Thus we also have a natural map $\Phi : \sT_e \cdot \left(F^e_* (dR) \right) \rightarrow R$ by restriction.  The image of $\Phi$ certainly contains $I$.  In particular, there exists a $\phi \in \sT_e$ such that $1 \in \phi(F^e_* (dR))$ as desired.
\end{proof}

\section{log-$\bQ$-Gorenstein pairs}

In this section, we consider special pairs $(R, \sT)$ that behave essentially like pairs $(R, \Delta)$ where $R$ is normal and $K_R + \Delta$ is $\bQ$-Cartier with index not divisible by $p > 0$.

\begin{definition}
\label{DefinitionFreeGen}
A \reduced{} pair $(R, \sT)$ (or \reduced{} triple $(R, \sT, \ba^t)$) will be called \emph{\freegen{}} if there exists an integer $e_0 > 0$ such that the following holds:
\begin{itemize}
\item[(i)]  $\sT_{e_0}$ is isomorphic (as an $F^{e_0}_* R$-module) to $F^{e_0}_* R$.
\item[(ii)]  For every $e > 0$, write $e = ne_0 + r$ for some $n, r \geq 0$ (note that we do not require $r < e_0$).  Then the natural map
\[\xymatrix@R=6pt{
\sT_{ne_0} \tensor_{F^{ne_0}_* R} F^{ne_0}_* \sT_{r} \ar[r] & \sT_e\\
\phi \tensor F^{ne_0}_* \psi \ar@{|->}[r] & \phi \circ (F^{ne_0}_* \psi)
}
\]
induced by composition is an isomorphism of $F^e_* R$-modules.
\end{itemize}
\end{definition}

\begin{remark}
Note that (i) and (ii) imply that $\sT_{me_0} \cong F^{me_0}_* R$ for all $m \geq 0$.
\end{remark}

\begin{remark}
Given a \reduced{} $\phi \in \sC(R)_i$, the subalgebra $\sT = R\langle \phi \rangle$ generated by $\phi$ and $R = \sC(R)_0$ is \freegen.
\end{remark}

\begin{remark}
It follows from \cite[Corollary 3.9]{SchwedeFAdjunction} that if $R$ is a normal domain and $\Delta$ is an effective $\bQ$-divisor such that $R((p^{e_0} - 1)(K_R + \Delta))$ is free\footnote{This always happens locally for pairs $(X, \Delta)$ such that $K_X + \Delta$ is $\bQ$-Cartier with index not divisible by $p > 0$. It follows because if $(p^e - 1)\Delta$ is integral, then $\Hom_R(F^e_* R((p^e - 1)\Delta), R) \cong R((1-p^e)(K_X + \Delta))$.}, then the associated pair $(R, \sT)$ is \freegen{}.
\end{remark}

\begin{remark}
Note that if $(R, \sT)$ is \freegen{}, then $\sT$ is also finitely generated as an algebra (over $\sT_0$).
\end{remark}

\begin{proposition} \cite[Lemma 8.16]{HochsterHunekeTC1}, \cite[Proposition 3.5]{TakagiPLTAdjoint}, \cite[Proposition 4.7]{SchwedeFAdjunction}
\label{PropCanUseACertainSubset}
Suppose that $(R, \sT, \ba^t)$ is \freegen{} with associated $e_0 > 0$.  Consider the graded subalgebra
\[
\sT' = \oplus_{n \geq 0} \sT_{ne_0} \subseteq \sT.
\]
Then $\tau_b(R; \sT, \ba^t) = \tau_b(R; \sT', \ba^t)$.
\end{proposition}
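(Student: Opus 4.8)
The plan is to prove the two inclusions of $\tau_b(R;\sT,\ba^t)=\tau_b(R;\sT',\ba^t)$ separately; the inclusion $\supseteq$ is formal, while $\subseteq$ is where the \freegen{} hypothesis is genuinely used. First note that $(R,\sT',\ba^t)$ is again a \reduced{} triple: $\sT'_{e_0}=\sT_{e_0}\cong F^{e_0}_*R$ is free of rank one, so a module generator $\Phi$ of it restricts, at each minimal prime $\eta$ of $R$, to a generator of the nonzero module $F^{e_0}_*R_\eta$, whence $\Phi$ is a \reduced{} homogeneous element of $\sT'$; and $\ba\cap R^{\circ}\neq\emptyset$ is inherited. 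Hence $\tau_b(R;\sT',\ba^t)$ exists by Theorem \ref{ThmTestIdealExists}, and since $\sT'\subseteq\sT$ is a graded subalgebra the containment $\tau_b(R;\sT',\ba^t)\subseteq\tau_b(R;\sT,\ba^t)$ is immediate from the definition (this is the monotonicity of test ideals recorded above).

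For the reverse inclusion I would work with a carefully chosen test element. Let $x_1,\dots,x_\ell$ generate $\ba$ and set $M_0:=p^{e_0}(\ell+1)$. Using Proposition \ref{PropHDBSTestEltsExist} (both $(R,\sT,\ba^t)$ and $(R,\sT',\ba^t)$ contain a \reduced{} homogeneous element in degree $e_0$), pick \homgenbs{} test elements $c_{\sT}$ for $(R,\sT,\ba^t)$ and $c_{\sT'}$ for $(R,\sT',\ba^t)$, pick $c_0\in\ba\cap R^{\circ}$, and put $c:=c_{\sT}\cdot c_{\sT'}^{\,p^{e_0-1}}\cdot c_0^{\,M_0}$. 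Multiplying a \homgenbs{} test element by an element of $R^{\circ}$ keeps it one, so $c$ is a \homgenbs{} test element for $(R,\sT,\ba^t)$; moreover $c_{\sT'}^{\,p^{e_0-1}}$ and each of its positive $p$-power divisors are \homgenbs{} test elements for $(R,\sT',\ba^t)$, hence lie in $J:=\tau_b(R;\sT',\ba^t)$; and $c_0^{\,M_0}\in\ba^{M_0}\cap R^{\circ}$. By the description of the test ideal obtained in the proof of Theorem \ref{ThmTestIdealExists},
\[
\tau_b(R;\sT,\ba^t)=\sum_{e\geq 0}\ \sum_{\phi\in\sT_e}\ \phi\!\left(F^e_*\,\ba^{\lceil t(p^e-1)\rceil}(c)\right),
\]
so it suffices to check that each summand lies in $J$. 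Fix $\phi\in\sT_e$ and write $e=ne_0+r$ with $0\leq r<e_0$; if $r=0$ then $\phi\in\sT'_n$ and there is nothing to prove, so assume $0<r<e_0$. Because $(R,\sT,\ba^t)$ is \freegen{}, $\sT_{ne_0}\cong F^{ne_0}_*R$ is free of rank one with some generator $\Phi_n$, and Definition \ref{DefinitionFreeGen}(ii) makes the composition map $\sT_{ne_0}\tensor_{F^{ne_0}_*R}F^{ne_0}_*\sT_r\to\sT_e$ an isomorphism; combining the two, $\phi=\Phi_n\circ(F^{ne_0}_*\psi)$ for a unique $\psi\in\sT_r$, and evaluating on $F^e_*R=F^{ne_0}_*F^r_*R$ one has $\phi(F^e_*(acz))=\Phi_n\!\left(F^{ne_0}_*\!\left(\psi(F^r_*(acz))\right)\right)$ for $a\in\ba^{\lceil t(p^e-1)\rceil}$, $z\in R$. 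Since $\Phi_n\in\sT_{ne_0}=\sT'_n$ and $J$ is uniformly $(\sT',\ba^t,F)$-compatible, it is enough to show $\psi(F^r_*(acz))\in\ba^{\lceil t(p^{ne_0}-1)\rceil}J$.

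This last point is the technical heart. From the identity $t(p^e-1)=p^r\!\cdot t(p^{ne_0}-1)+t(p^r-1)$ one gets $\lceil t(p^e-1)\rceil+M_0\geq p^r\bigl(\lceil t(p^{ne_0}-1)\rceil+\ell\bigr)$. Expand $a\,c_0^{\,M_0}=\sum_k\lambda_k x^{\alpha_k}$ as a combination of degree-$\bigl(\lceil t(p^e-1)\rceil+M_0\bigr)$ monomials in the $x_i$, and for each $k$ divide exponents by $p^r$: set $\alpha_k=p^r\gamma_k+\delta_k$ with $\gamma_k=(\lfloor(\alpha_k)_i/p^r\rfloor)_i$, so that $|\gamma_k|\geq\lceil t(p^{ne_0}-1)\rceil$ and hence $x^{\gamma_k}\in\ba^{|\gamma_k|}\subseteq\ba^{\lceil t(p^{ne_0}-1)\rceil}$. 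Writing $c_{\sT'}^{\,p^{e_0-1}}=\bigl(c_{\sT'}^{\,p^{e_0-1-r}}\bigr)^{p^r}$ and pulling all $p^r$-th powers out through $F^r_*$ yields
\[
\psi\!\left(F^r_*(acz)\right)=c_{\sT'}^{\,p^{e_0-1-r}}\sum_k x^{\gamma_k}\,\psi\!\left(F^r_*(\lambda_k\,c_{\sT}\,z\,x^{\delta_k})\right)\ \in\ \ba^{\lceil t(p^{ne_0}-1)\rceil}J,
\]
the membership because $c_{\sT'}^{\,p^{e_0-1-r}}$ is a \homgenbs{} test element of $(R,\sT',\ba^t)$, hence lies in $J$, and each $x^{\gamma_k}\in\ba^{\lceil t(p^{ne_0}-1)\rceil}$. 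Therefore $\phi(F^e_*(acz))\in J$ for every generator of the summand, and summing over $e$ and $\phi$ gives $\tau_b(R;\sT,\ba^t)\subseteq J$, which together with the easy inclusion completes the proof.

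The main obstacle is exactly the displayed estimate in the previous paragraph: matching the ideal $\ba^{\lceil t(p^e-1)\rceil}$ attached to the ``misaligned'' degree $e=ne_0+r$ with the ideal $\ba^{\lceil t(p^{ne_0}-1)\rceil}$ needed to feed $\Phi_n$, across the degree-$r$ twist $F^r_*$ — which converts an ordinary power of $\ba$ only into a Frobenius power, and past which the ceilings do not split exactly. The device that overcomes it is to build the test element $c$ divisible simultaneously by a large enough power $c_0^{\,M_0}$ of an element of $\ba\cap R^{\circ}$ (so that, after the splitting $\alpha=p^r\gamma+\delta$, ordinary powers of $\ba$ acquire $p^r$-th power factors $x^{p^r\gamma}$ landing in the desired power of $\ba$) and by $c_{\sT'}^{\,p^{e_0-1}}$, which for every $r<e_0$ equals $\bigl(c_{\sT'}^{\,p^{e_0-1-r}}\bigr)^{p^r}$, whose base $c_{\sT'}^{\,p^{e_0-1-r}}$ is itself a test element of $(R,\sT',\ba^t)$ and so, once pulled out through $F^r_*$, supplies membership in $J$. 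Everything else — reducing to the summands, the factorization $\phi=\Phi_n\circ(F^{ne_0}_*\psi)$ coming from the \freegen{} structure, and the easy containment — is routine given Theorem \ref{ThmTestIdealExists}, Definition \ref{DefinitionFreeGen}, and the basic properties of test ideals already established.
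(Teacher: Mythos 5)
Your proof is correct and follows essentially the same strategy as the paper's: factor each $\phi \in \sT_{ne_0+r}$ through the free generator of $\sT_{ne_0}$ and pad the test element with suitable $p$-th powers so that, after applying the degree-$r$ piece, what remains lies in $\ba^{\lceil t(p^{ne_0}-1)\rceil}$ times the test ideal of $(R,\sT',\ba^t)$. The only real difference is cosmetic: the paper imports the uniform multiplier $d$ satisfying $d\,\ba^{\lceil t(p^{ne_0+k}-1)\rceil} \subseteq (\ba^{\lceil t(p^{ne_0}-1)\rceil})^{[p^k]}$ from \cite{SchwedeFAdjunction}, whereas you verify this by hand via $c_0^{M_0}$ and the exponent-splitting of monomials, and you conclude using uniform $(\sT',\ba^t,F)$-compatibility of $\tau_b(R;\sT',\ba^t)$ rather than its explicit generating sum.
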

\begin{proof}
We may select $c \in \ba \cap R^{\circ}$ which is a \homgenbs{} test element for $(R, \sT', \ba^t)$.  Choose $d \in R^{\circ}$ such that
\[
d \ba^{\lceil t(p^{ne_0 + k} - 1) \rceil} \subseteq (\ba^{\lceil t(p^{n{e_0}} - 1) \rceil})^{[p^k]}
 \]
for all $n \geq 0$ and all $k < e_0$ (we can do this due to \cite[Lemma 4.6, Proposition 4.7]{SchwedeFAdjunction}).  Then $d c^{p^{e_0}}$ is also a \homgenbs{} test element for both $(R, \sT', \ba^t)$ and $(R, \sT, \ba^t)$.  Fix $\psi \in \sT_{e_0}$ to be a generator of $\sT_{e_0}$ as an $F^{e_0}_* R$-module.  Then $\psi^n$ is a generator of $\sT_{ne_0}$ as an $F^{ne_0}_* R$-module.  Furthermore, if $e = ne_0 + k$, then every element $\phi \in \sT_e$ can be written as $\psi^n \circ F^{ne_0}_* \phi'$ for some $\phi' \in \sT_{k}$.  Therefore,
\[
\begin{split}
\tau_b(R; \sT, \ba^t) = \sum_{e \geq 0} \sum_{\phi \in \sT_e} \phi(F^e_* (dc^{p^{e_0}}) \ba^{\lceil t(p^e - 1) \rceil}) \\
= \sum_{n \geq 0} \sum_{k = 0}^{e_0 - 1} \sum_{\phi \in \sT_{ne_0 + k}} \phi(F^{ne_0 + k}_* (dc^{p^{e_0}}) \ba^{\lceil t(p^{ne_0 + k} - 1) \rceil}) \\
= \sum_{n \geq 0} \sum_{k = 0}^{e_0 - 1} \sum_{\phi' \in \sT_{k}} \psi^n(F^{ne_0}_* \phi'(F^{k}_* (dc^{p^{e_0}}) \ba^{\lceil t(p^{ne_0 + k} - 1) \rceil})) \\
\subseteq \sum_{n \geq 0} \sum_{k = 0}^{e_0 - 1} \sum_{\phi' \in \sT_{k}} \psi^n(F^{ne_0}_* \phi'(F^{k}_* (c^{p^{k}}) (\ba^{\lceil t(p^{n{e_0}} - 1) \rceil})^{[p^k]})) \\
= \sum_{n \geq 0} \sum_{k = 0}^{e_0 - 1} \sum_{\phi' \in \sT_{k}} \psi^n(F^{ne_0}_* (c) (\ba^{\lceil t(p^{n{e_0}} - 1) \rceil}) \phi'(F^{k}_*  (R)))\\
\subseteq \sum_{n \geq 0} \sum_{k = 0}^{e_0 - 1} \sum_{\phi' \in \sT_{k}} \psi^n(F^{ne_0}_* (c) (\ba^{\lceil t(p^{n{e_0}} - 1) \rceil}) R)\\
= \sum_{n \geq 0} \psi^n(F^{ne_0}_* (c) (\ba^{\lceil t(p^{n{e_0}} - 1) \rceil}))
= \tau_b(R; \sT', \ba^t).
\end{split}
\]
Note that the final equality occurs because $\psi^n$ generates $\sT_{ne}$ as an $F^{ne}_* R$-module.
\end{proof}

\begin{remark}
This result is closely related to the fact that if $c z^{p^e} \in I^{[p^e]}$ for all $e = n e_0$, then $z \in I^*$, compare with \cite[Proposition 4.7]{SchwedeFAdjunction} \cite[Proposition 3.5]{TakagiPLTAdjoint} and \cite[Lemma 8.16]{HochsterHunekeTC1}.  In fact, those previous results all had related proofs.  However they all relied on (Matlis-dual) analogs of the fact that there are natural maps $\Hom_R(F^{e+k}_* R, R) \rightarrow \Hom_R(F^e_* R, R)$.  We took a slightly different approach above in the proof of Proposition \ref{PropCanUseACertainSubset}.
\end{remark}

We also link the test ideals of \freegen{} triples $(R, \sT, \ba^t)$ with the test ideals of triples $(R, \Delta, \ba^t)$.

\begin{corollary}
\label{CorFreegenIsLogQGor}
Suppose that $R$ is a normal domain and that $(R, \sT, \ba^t)$ is a \freegen{} triple with associated $e_0$.  Then there exists an effective $\bQ$-divisor $\Delta$ such that $(p^{e_0} - 1)(K_R + \Delta)$ is Cartier and such that
\[
\tau_b(R; \Delta, \ba^t) = \tau_b(R; \sT, \ba^t).
\]
\end{corollary}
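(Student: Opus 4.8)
The plan is to extract a $\bQ$-divisor from the generator $\psi$ of $\sT_{e_0}$ and then identify the corresponding test ideals. First I would invoke the hypothesis that $\sT_{e_0} \cong F^{e_0}_* R$ as an $F^{e_0}_* R$-module, so $\sT_{e_0}$ is generated by a single element $\psi \in \Hom_R(F^{e_0}_* R, R)$. Since $(R, \sT)$ is \reduced{}, $\psi$ is a \reduced{} element, hence (because $R$ is a normal domain) $\psi$ is nonzero at the generic point. By \cite{SchwedeFAdjunction} (the correspondence between nonzero elements of $\Hom_R(F^{e_0}_* R, R)$ and effective $\bQ$-divisors $\Delta$ with $(p^{e_0}-1)(K_R + \Delta)$ Cartier), this $\psi$ determines such a $\Delta$; concretely, $\psi$ generates the rank-one module $\Hom_R(F^{e_0}_* R((p^{e_0}-1)\Delta), R) \cong F^{e_0}_* R((1-p^{e_0})(K_R+\Delta))$, and $\Supp(\Delta) = \vlocus(\psi)$.

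Next I would compare the graded subalgebra $\sT' = \oplus_{n\geq 0}\sT_{ne_0} = R\langle\psi\rangle$ with the algebra $\sT_\Delta$ that Remark \ref{RemCanConstructNewAlgebras} associates to the pair $(R, \Delta)$ (with the graded system $\bb_i = \ba^{\lceil ti\rceil}$). The key point here is that since $(p^{e_0}-1)(K_R+\Delta)$ is Cartier with index whose relevant multiple is a power of the fixed $e_0$, the algebra $\sT_\Delta$ in degrees that are multiples of $e_0$ is exactly generated by powers of $\psi$ — that is, $(\sT_\Delta)_{ne_0} = \sT_{ne_0}$ — because $\psi^n$ generates $\Hom_R(F^{ne_0}_* R((p^{ne_0}-1)\Delta), R)$ as an $F^{ne_0}_* R$-module by \cite[Corollary 3.9]{SchwedeFAdjunction}. (In degrees not divisible by $e_0$ the two algebras may differ, but this will not matter.) Combining this with Proposition \ref{PropCanUseACertainSubset} applied on the $\sT$ side gives $\tau_b(R; \sT, \ba^t) = \tau_b(R; \sT', \ba^t)$; applying the analogous reduction on the $\sT_\Delta$ side — note $(R, \Delta, \ba^t)$ with the algebra $\sT_\Delta$ is also \freegen{} with the same $e_0$ — gives $\tau_b(R; \Delta, \ba^t) = \tau_b(R; \sT_\Delta, \ba^t) = \tau_b(R; \oplus_n (\sT_\Delta)_{ne_0}, \ba^t)$. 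Since $(\sT_\Delta)_{ne_0} = \sT_{ne_0} = \sT'_{ne_0}$ for all $n$, the two sub-algebras coincide, so the two test ideals agree.

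The main obstacle I expect is the bookkeeping with the $\ba^t$-coefficients when passing between the degree-$e_0$-multiples subalgebra and the full algebra on both sides: one must check that the graded system $\bb_{p^{ne_0}-1} = \ba^{\lceil t(p^{ne_0}-1)\rceil}$ interacts correctly with the free generator $\psi^n$, exactly as in the chain of inclusions in the proof of Proposition \ref{PropCanUseACertainSubset}, and that choosing a common \homgenbs{} test element $c$ (or $dc^{p^{e_0}}$) works simultaneously for $(R,\sT,\ba^t)$, $(R,\sT',\ba^t)$ and $(R,\Delta,\ba^t)$. This is where the uniform existence of test elements (Proposition \ref{PropHDBSTestEltsExist}, and the fact that multiplying a \homgenbs{} test element by an element of $R^\circ$ preserves the property) is used. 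Once the identification $(\sT_\Delta)_{ne_0} = \sT_{ne_0}$ is established, the rest is a formal consequence of Proposition \ref{PropCanUseACertainSubset} and the compatibility of test ideals with the absorption of $\ba^t$ into the algebra noted after Definition \ref{DefnBigTestIdeal}.
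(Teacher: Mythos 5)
Your proposal is correct and follows essentially the same route as the paper: take the free generator $\psi$ of $\sT_{e_0}$, convert it to an effective $\bQ$-divisor $\Delta$ with $(p^{e_0}-1)(K_R+\Delta)$ Cartier via the correspondence in \cite{SchwedeFAdjunction}, and then identify $\tau_b(R;\Delta,\ba^t)$ with $\tau_b(R;\sT,\ba^t)$ by reducing both sides to the subalgebra supported in degrees divisible by $e_0$ (Proposition \ref{PropCanUseACertainSubset} together with \cite[Proposition 4.7, Corollary 3.9]{SchwedeFAdjunction}). Your explicit verification that $(\sT_\Delta)_{ne_0}=\sT_{ne_0}$ via powers of $\psi$ is exactly the content the paper's terse proof delegates to those citations.
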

\begin{proof}
Choose $\phi \in \sT_{e_0}$ which generates $\sT_{e_0}$ as an $F^{e_0}_* R$-module.  Then by \cite[Theorem 3.10]{SchwedeFAdjunction}, $\phi$ corresponds to an effective $\bQ$-divisor $\Delta$.  The result then follows immediately from Definition \ref{DefnBigTestIdeal}, Proposition \ref{PropCanUseACertainSubset} and \cite[Proposition 4.7]{SchwedeFAdjunction}.
\end{proof}

\section{Proof of the main theorem}

In this section we prove our main result, which is stated below.

\begin{theorem}
\label{ThmMain}  Suppose that $(R, \sT, \ba^t)$ is a \ureduced{} triple.  Then there exist graded subalgebras $\sT^1, \ldots, \sT^n \subseteq \sT$ such that each $(R, \sT^i, \ba^t)$ is \freegen{} and such that
\[
\tau_b(R; \sT, \ba^t) = \sum_{i = 1}^n \tau_b(R; \sT^i, \ba^t).
\]
\end{theorem}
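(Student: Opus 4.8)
The plan is to reduce the general statement to a finite sum of \freegen{} pieces by the standard "test element + compatibility" argument, but now carried out uniformly over an infinite family of subalgebras. First I would fix a \homgenbs{} test element $c \in \ba \cap R^{\circ}$ for the triple $(R, \sT, \ba^t)$, which exists by Proposition \ref{PropHDBSTestEltsExist} (the hypothesis that $(R,\sT,\ba^t)$ is \ureduced{} guarantees a \reduced{} homogeneous element, hence the triple is \reduced{}). Writing $\tau_b(R; \sT, \ba^t) = \sum_{e \geq 0}\sum_{\phi \in \sT_e}\phi(F^e_*\ba^{\lceil t(p^e-1)\rceil}(c))$, the containment $\supseteq$ is automatic once we produce the $\sT^i$ inside $\sT$, since each $\tau_b(R;\sT^i,\ba^t)$ is uniformly $(\sT^i,\ba^t,F)$-compatible and each $\sT^i \subseteq \sT$; actually $\tau_b(R;\sT^i,\ba^t) \subseteq \tau_b(R;\sT,\ba^t)$ directly. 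So the content is in the reverse containment: exhibiting finitely many \freegen{} subalgebras whose test ideals collectively recapture all the generators $\phi(F^e_*\ba^{\lceil t(p^e-1)\rceil}(c))$.

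The key construction: for each $e > 0$ with $\sT_e \neq 0$, the \ureduced{} hypothesis says $\sT_e$ is generated as an $F^e_* R$-module by \reduced{} elements $\phi_{e,1},\dots,\phi_{e,m_e}$. Each such \reduced{} $\phi_{e,j}$ generates a \freegen{} subalgebra $R\langle\phi_{e,j}\rangle$ (by the remark after Definition \ref{DefinitionFreeGen}); after absorbing $\ba^t$, the triple $(R, R\langle\phi_{e,j}\rangle, \ba^t)$ is \freegen{}. By Remark \ref{RemarkNaturalMapsToR}, $\Image(\rho_{e,0}|_{\sT_e}) = \sum_j \Image(\phi_{e,j})$, so on the level of evaluation-at-$1$ maps the generators $\phi_{e,j}$ already capture everything in degree $e$; one then bootstraps this to all the compositions via the multiplicative structure, as in the proof of Proposition \ref{PropCanUseACertainSubset}. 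The point is that only finitely many of these subalgebras are needed: this is where one invokes Proposition \ref{PropUniformExistenceOfTestElements} (the uniform existence of a single $c$ that is a \homgenbs{} test element simultaneously for a suitable infinite family), together with a Noetherian/finite-generation argument to cut down to $\sT^1,\dots,\sT^n$. Concretely, I would argue that $\tau_b(R;\sT,\ba^t)$, being a finitely generated ideal, is generated by finitely many elements, each lying in some $\phi(F^e_*\ba^{\lceil t(p^e-1)\rceil}(c))$ with $\phi$ a product of the \reduced{} generators; tracking the finitely many \reduced{} generators actually used yields the finite list, and we take $\sT^i$ to be the subalgebras they generate (or finite joins thereof to stay inside $\sT$).

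The main obstacle I anticipate is the uniformity/finiteness step: a priori the \reduced{} generators occurring across all degrees $e$ form an infinite set, and one must show a single test element $c$ works for all of the associated \freegen{} triples at once and that only finitely many are needed. This is exactly the role of Proposition \ref{PropUniformExistenceOfTestElements}; the delicate part is verifying that the hypotheses of that proposition are met by the family $\{R\langle\phi_{e,j}\rangle\}$ — in particular that the degeneracy loci behave well enough (e.g.\ are contained in a common $V(c)$ with $R_c$ strongly regular) so that the argument of Lemma \ref{InitialChoiceOfC} applies uniformly. Once uniformity is in hand, the passage from "the generators of $\tau_b(R;\sT,\ba^t)$ are finitely many" to "finitely many $\sT^i$ suffice" is a routine repackaging, essentially copying the localization-and-cover argument already used in the proof that strong $F$-regularity is detected by $\tau_b = R$, combined with the degree-reduction computation from Proposition \ref{PropCanUseACertainSubset}.
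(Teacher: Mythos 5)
Your overall skeleton matches the paper's outline (finitely many \freegen{} subalgebras generated by single \reduced{} elements, a uniform \homgenbs{} test element, then a decomposition of the test ideal), but the finiteness/uniformity step as you have arranged it has a genuine gap. Proposition \ref{PropUniformExistenceOfTestElements} does not give ``a single $c$ that is a \homgenbs{} test element simultaneously for a suitable infinite family'': its hypothesis is that the ambient subalgebra is generated by \emph{finitely many} homogeneous elements, and its proof takes the product $c' = \prod_i c_i$ of finitely many elements, so it says nothing about the infinite family $\{R\langle \phi_{e,j}\rangle\}_{e,j}$ of subalgebras coming from module generators of \emph{all} graded pieces. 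Your substitute --- fix a \homgenbs{} test element $c$ for the big triple, write $\tau_b(R;\sT,\ba^t) = \sum_{e,j} \phi_{e,j}(F^e_* \ba^{\lceil t(p^e-1)\rceil}(c))$, and keep the finitely many $\phi_{e,j}$ that suffice --- does not close the argument: to conclude that $\phi_{e,j}(F^e_* \ba^{\lceil t(p^e-1)\rceil}(c)) \subseteq \tau_b(R; R\langle \phi_{e,j}\rangle, \ba^t)$ you need $c$ to lie in (indeed be a test element for) the \emph{small} triple, and being a \homgenbs{} test element for $(R,\sT,\ba^t)$ does not give this. For instance, with $R = k[x]$, $\Phi$ a generator of $\Hom_R(F_*R,R)$ and $\phi = \Phi(F_* x^{p-1}\cdot)$, the element $c=1$ is a \homgenbs{} test element for $(R,\sC(R))$, yet $\phi(F_*(c)R) = R \not\subseteq (x) = \tau_b(R; R\langle\phi\rangle)$. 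If instead you try to choose $c$ \emph{after} extracting the finite family (by applying Proposition \ref{PropUniformExistenceOfTestElements} to those finitely many elements), the circularity surfaces: the finite family was selected using the old $c$, and replacing $c$ by the new uniform element changes which finitely many $\phi_{e,j}$ suffice, so selection and uniformity chase each other. Finite generation of the single ideal $\tau_b(R;\sT,\ba^t)$ is not the finiteness the argument needs.

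The paper resolves this by reordering: first (Proposition \ref{PropCanReplaceByFiniteGenSet}) it uses the ascending chain condition on the test ideals of an increasing chain of finitely generated subalgebras to produce a subalgebra $\sT' \subseteq \sT$ generated by $\sT_0$ and finitely many \reduced{} elements $\phi_1,\ldots,\phi_m$ with $\tau_b(R;\sT',\ba^t) = \tau_b(R;\sT,\ba^t)$ (this is where the \ureduced{} hypothesis is used); only then does it apply Proposition \ref{PropUniformExistenceOfTestElements} to this fixed finite set, obtaining one $c$ that is a \homgenbs{} test element for every $(R, R\langle \phi_{i_1}\cdots\phi_{i_n}\rangle, \ba^t)$ --- a family determined in advance, independently of $c$ --- so that every generator of $\tau_b(R;\sT',\ba^t) = \sum \phi_{i_1,\ldots,i_n}(F^e_*(c)\ba^{\lceil t(p^e-1)\rceil})$ visibly lies in the corresponding $\tau_b(R; R\langle\phi_{i_1,\ldots,i_n}\rangle,\ba^t)$. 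A secondary confusion in your write-up: the \reduced{} elements supplied by the \ureduced{} hypothesis are $F^e_*R$-module generators of the individual graded pieces, not algebra generators of $\sT$, so a general element of $\sT_e$ is an $F^e_*R$-combination of them, not ``a product of the reduced generators''; products only enter after the reduction to a finitely generated algebra. (Also, joins of \freegen{} subalgebras need not be \freegen{}, so the parenthetical ``finite joins thereof'' would break the conclusion.) To repair your proof, import the paper's step (i) before invoking the uniform test element, or else prove a genuinely new uniform-test-element statement valid for infinitely many \reduced{} elements, which Proposition \ref{PropUniformExistenceOfTestElements} does not supply.
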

We obtain the following corollaries.
\begin{corollary}
\label{CorMain}
Suppose that $(R, \ba^t)$ is a pair and that $R$ is a $F$-finite normal domain.  Then
\[
\tau_b(R; \ba^t) = \sum_{\Delta} \tau_b(R; \Delta, \ba^t)
\]
where the sum is over effective $\bQ$-divisors $\Delta$ such that $K_X + \Delta$ is $\bQ$-Cartier with index not divisible by $p > 0$ and $X = \Spec R$.
\end{corollary}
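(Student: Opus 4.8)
The plan is to reduce everything to Theorem~\ref{ThmMain} applied to the triple $(R,\sC(R),\ba^t)$, which by the conventions of the paper is exactly what the pair $(R,\ba^t)$ abbreviates. First I would verify that $(R,\sC(R),\ba^t)$ is \ureduced{}. Since $R$ is a normal domain its only minimal prime is $(0)$ and $R_{(0)}=K:=\mathrm{Frac}(R)$; because $\Hom_R(F^e_*R,R)$ is a torsion-free module over the domain $R$, every nonzero homogeneous element of $\sC(R)$ has nonzero image in $\sC(R)\otimes_R K$, i.e.\ is \reduced{}. Moreover $\sC(R)_e\otimes_R K\cong\Hom_K(F^e_*K,K)\neq 0$ for every $e\geq 1$ (here $F^e_*R$ is finitely presented since $R$ is $F$-finite Noetherian, and $K$ is $F$-finite), so each nonzero $\sC(R)_e$ is generated by \reduced{} elements. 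Hence $(R,\sC(R))$ is \ureduced{}, and, $\ba$ being a nonzero ideal of the domain $R$ so that $\ba\cap R^{\circ}\neq\emptyset$, the triple $(R,\sC(R),\ba^t)$ is \ureduced{}.

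Next I would invoke Theorem~\ref{ThmMain} to obtain graded subalgebras $\sT^1,\dots,\sT^n\subseteq\sC(R)$ with each $(R,\sT^i,\ba^t)$ \freegen{} and
\[
\tau_b(R;\ba^t)=\tau_b(R;\sC(R),\ba^t)=\sum_{i=1}^n\tau_b(R;\sT^i,\ba^t).
\]
Applying Corollary~\ref{CorFreegenIsLogQGor} to each \freegen{} triple $(R,\sT^i,\ba^t)$, with its associated integer $e_i$, produces an effective $\bQ$-divisor $\Delta_i$ with $(p^{e_i}-1)(K_R+\Delta_i)$ Cartier and $\tau_b(R;\Delta_i,\ba^t)=\tau_b(R;\sT^i,\ba^t)$; in particular $K_X+\Delta_i$ is $\bQ$-Cartier of index dividing $p^{e_i}-1$, hence of index not divisible by $p$. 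Summing over $i$ gives $\tau_b(R;\ba^t)=\sum_{i=1}^n\tau_b(R;\Delta_i,\ba^t)\subseteq\sum_{\Delta}\tau_b(R;\Delta,\ba^t)$, the latter sum over all effective $\bQ$-divisors $\Delta$ with $K_X+\Delta$ $\bQ$-Cartier of index prime to $p$.

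For the opposite inclusion I would show each summand $\tau_b(R;\Delta,\ba^t)$ of the right-hand side lies in $\tau_b(R;\ba^t)$. Let $\sT_\Delta\subseteq\sC(R)$ be the subalgebra associated to $(R,\Delta)$ as in Remark~\ref{RemCanConstructNewAlgebras}. Since $R$ is a normal domain with $\ba\cap R^{\circ}\neq\emptyset$, the triple $(R,\sT_\Delta,\ba^t)$ is \reduced{} (the note following Remark~\ref{RemCanConstructNewAlgebras}), so its big test ideal exists by Theorem~\ref{ThmTestIdealExists} and equals $\tau_b(R;\Delta,\ba^t)$ (the remark following Definition~\ref{DefnBigTestIdeal}); and by the monotonicity of the big test ideal under passage to a graded subalgebra it is contained in $\tau_b(R;\sC(R),\ba^t)=\tau_b(R;\ba^t)$. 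Combining the two inclusions yields the asserted equality.

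Since the real content is carried by Theorem~\ref{ThmMain} and Corollary~\ref{CorFreegenIsLogQGor}, the only things needing care here are the bookkeeping steps above: that $(R,\sC(R),\ba^t)$ is \ureduced{}, that an index dividing $p^{e_i}-1$ is automatically not divisible by $p$, and that the evident containment $\tau_b(R;\Delta,\ba^t)\subseteq\tau_b(R;\ba^t)$ is correctly routed through the subalgebra $\sT_\Delta$. I do not expect a genuine obstacle at this stage; the difficulty lives entirely inside the results being invoked.
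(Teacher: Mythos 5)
Your proposal is correct and follows essentially the same route as the paper: the containment $\subseteq$ comes from Theorem \ref{ThmMain} applied to the triple $(R, \sC(R), \ba^t)$ together with Corollary \ref{CorFreegenIsLogQGor}, and the containment $\supseteq$ is the evident monotonicity $\tau_b(R; \Delta, \ba^t) \subseteq \tau_b(R; \ba^t)$, which you correctly justify via the subalgebra $\sT_\Delta$ of Remark \ref{RemCanConstructNewAlgebras}. Your verification that $(R, \sC(R), \ba^t)$ is \ureduced{} for a normal domain (torsion-freeness of $\Hom_R(F^e_* R, R)$ plus nonvanishing at the generic point) is a bookkeeping step the paper leaves implicit, and it is fine.
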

\begin{proof}
The containment $\supseteq$ follows from the fact that $\tau(R; \Delta, \ba^t) \subseteq \tau_b(R; \ba^t)$ for every effective $\Delta$.  The containment $\subseteq$ follows from Corollary \ref{CorFreegenIsLogQGor}.
\end{proof}

\begin{remark}
The actual correspondence from \cite{SchwedeFAdjunction} between $\phi$'s and $\Delta$'s identifies $\phi$ (modulo some equivalence relation) with $\Delta$ such that $(p^e - 1)(K_X + \Delta)$ is Cartier and that $\O_X((pe - 1)(K_X + \Delta)) \cong \O_X$.  Thus we may further restrict our sum in the previous corollary to be over such $\Delta$.
\end{remark}

\begin{corollary}
\label{CorDontNeedBAT}
Suppose that $(R, \ba^t)$ is a pair and that $R$ is an $F$-finite normal domain, $X = \Spec R$.  Then there exists finitely many effective $\bQ$-divisors $\Delta_i$ such that $K_X + \Delta_i$ is $\bQ$-Cartier with index not divisible by $p$ and that
\[
\tau_b(R; \ba^t) = \sum_{\Delta_i} \tau_b(R; \Delta_i).
\]
\end{corollary}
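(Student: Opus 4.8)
The plan is to reduce Corollary~\ref{CorDontNeedBAT} to Corollary~\ref{CorMain} (equivalently, to Theorem~\ref{ThmMain}) by absorbing the $\ba^t$-term into the defining algebras. Concretely, I would start from the decomposition furnished by Theorem~\ref{ThmMain} applied to the triple $(R, \sC(R), \ba^t)$, which is \ureduced{} since $R$ is a normal domain and $\ba \cap R^\circ \neq \emptyset$: there are graded subalgebras $\sT^1, \ldots, \sT^n \subseteq \sC(R)$ with each $(R, \sT^i, \ba^t)$ \freegen{} and $\tau_b(R; \ba^t) = \sum_{i=1}^n \tau_b(R; \sT^i, \ba^t)$.

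Next I would get rid of the $\ba^t$. For each $i$, define a new graded algebra $\sS^i$ by $\sS^i_j := \sT^i_j \cdot \left(F^j_* \ba^{\lceil t(p^j - 1)\rceil}\right)$; by the Remark following Definition~\ref{DefnBigTestIdeal} (``Simply define $\sT'_i := \sT_i \cdot (F^i_* \ba^{\lceil t(p^i-1)\rceil})$''), one has $\tau_b(R; \sS^i) = \tau_b(R; \sT^i, \ba^t)$. The key point to check here is that each pair $(R, \sS^i)$ is still \freegen{}: since $(R, \sT^i, \ba^t)$ is \freegen{} with some associated $e_0$, condition (i) of Definition~\ref{DefinitionFreeGen} for $\sS^i$ amounts to $\sT^i_{e_0}\cdot(F^{e_0}_*\ba^{\lceil t(p^{e_0}-1)\rceil})$ being free of rank one, and condition (ii) follows from the graded-system inequality $p^{ne_0}\lceil t(p^{r}-1)\rceil + \lceil t(p^{ne_0}-1)\rceil \ge \lceil t(p^{ne_0+r}-1)\rceil$ together with the corresponding isomorphism for $\sT^i$ — this is the same bookkeeping as in Remark~\ref{RemCanConstructNewAlgebras}(1),(2). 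I expect verifying the freeness in (i) of $\sS^i$, rather than merely its being principal, to be the main obstacle, and here one uses that \freegen{}-ness is a local-and-generic condition in the sense of the footnote to the remark after Definition~\ref{DefinitionFreeGen}: it suffices to know $R((p^{e_0}-1)(K_R+\Delta_i))$ is free, which one can arrange after multiplying $e_0$ appropriately or, if necessary, by passing to the locally free situation and noting the argument is Zariski-local on $\Spec R$.

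Finally, I would invoke Corollary~\ref{CorFreegenIsLogQGor} (in its pair form, $\ba = R$): since each $(R, \sS^i)$ is \freegen{} and $R$ is a normal domain, there is an effective $\bQ$-divisor $\Delta_i$ with $(p^{e_0^{(i)}} - 1)(K_R + \Delta_i)$ Cartier — in particular $K_X + \Delta_i$ is $\bQ$-Cartier with index not divisible by $p$ — and $\tau_b(R; \Delta_i) = \tau_b(R; \sS^i) = \tau_b(R; \sT^i, \ba^t)$. Summing over $i = 1, \ldots, n$ yields
\[
\tau_b(R; \ba^t) = \sum_{i=1}^n \tau_b(R; \sT^i, \ba^t) = \sum_{i=1}^n \tau_b(R; \Delta_i) = \sum_{\Delta_i} \tau_b(R; \Delta_i),
\]
which is the claimed finite decomposition. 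The content beyond Theorem~\ref{ThmMain} is thus entirely the passage from $\ba^t$-triples to divisor pairs, and the remark about the uncontrollable dependence of the $\Delta_i$ on $t$ is visible already at the first step, where Theorem~\ref{ThmMain} produces the $\sT^i$.
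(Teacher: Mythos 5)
Your strategy is the right family of ideas, but the order in which you perform the two operations creates a genuine gap at the absorption step. After applying Theorem \ref{ThmMain} to the triple $(R,\sC(R),\ba^t)$ you still have to remove $\ba^t$, and the pair $(R,\sS^i)$ with $\sS^i_j := \sT^i_j\cdot\left(F^j_*\ba^{\lceil t(p^j-1)\rceil}\right)$ is in general \emph{not} \freegen{}: since $\sT^i_{e_0}$ is free on one generator, $\sS^i_{e_0}$ is isomorphic as an $F^{e_0}_*R$-module to $F^{e_0}_*\ba^{\lceil t(p^{e_0}-1)\rceil}$, and an ideal of a domain is free only when it is principal; replacing $e_0$ by a multiple merely changes which power of $\ba$ appears, and localizing does not help either (think of $\ba$ a maximal ideal in dimension at least two). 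Condition (ii) of Definition \ref{DefinitionFreeGen} also breaks: the inequality $p^{ne_0}\lceil t(p^{r}-1)\rceil+\lceil t(p^{ne_0}-1)\rceil\geq\lceil t(p^{ne_0+r}-1)\rceil$ only shows that the composition map lands \emph{inside} $\sS^i_{ne_0+r}$, not that it is surjective, because the corresponding product of powers of $\ba$ is in general strictly smaller than $\ba^{\lceil t(p^{ne_0+r}-1)\rceil}$. The appeal to the footnote after Definition \ref{DefinitionFreeGen} does not repair this: that footnote concerns local freeness of $R((p^{e_0}-1)(K_X+\Delta))$, whereas \freegen{}-ness is a global freeness condition on the degree-$e_0$ piece, and the statement you are proving (one finite list of global divisors $\Delta_i$ and a global equality of ideals) is not something you can verify Zariski-locally and then glue. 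Since Corollary \ref{CorFreegenIsLogQGor} is exactly what converts an algebra into a single divisor, the step where you invoke it for $(R,\sS^i)$ is unjustified, and this is precisely the content of Corollary \ref{CorDontNeedBAT} beyond Corollary \ref{CorMain}.

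The paper's proof reverses the order, and this is the fix: first absorb $\ba^t$ into the algebra, forming the \ureduced{} pair $(R,\sT)$ with $\sT_e=\sC(R)_e\cdot\left(F^e_*\ba^{\lceil t(p^e-1)\rceil}\right)$, so that $\tau_b(R;\sT)=\tau_b(R;\ba^t)$ by Remark \ref{RemCanConstructNewAlgebras}; then apply Theorem \ref{ThmMain} to this pair (that is, with $\ba=R$). The subalgebras it produces are each generated over $R\cong\sT_0$ by a single \reduced{} homogeneous element, hence are \freegen{} as pairs, and Corollary \ref{CorFreegenIsLogQGor} with $\ba=R$ turns each of them into some $\tau_b(R;\Delta_i)$ with $(p^{e_i}-1)(K_X+\Delta_i)$ Cartier. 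Alternatively, you could salvage your route by applying Theorem \ref{ThmMain} a second time, to each \ureduced{} pair $(R,\sS^i)$, but that is just a longer version of the same argument; as written, the claim that $(R,\sS^i)$ is itself \freegen{} is false in general.
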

\begin{proof}
Out of the pair $(R, \ba^t)$, one may create a graded subalgebra $\sT \subseteq \sC(R)$ which has the same test ideal as the pair $(R, \ba^t)$, see Remark \ref{RemCanConstructNewAlgebras}.  Then one can apply Theorem \ref{ThmMain} and Corollary \ref{CorFreegenIsLogQGor}.
\end{proof}

The proof of Theorem \ref{ThmMain} is rather technical.  So we first outline the strategy we will use.
There are three steps.
\begin{itemize}
\item[(i)]  Find \reduced{} homogeneous $\phi_i \in \sT$, $i = 1, \dots, m$ such that if $\sT'$ is the subalgebra of $\sT$ generated by $\sT_0$ and the $\phi_i$, then $\tau_b(R; \sT, \ba^t) = \tau_b(R; \sT', \ba^t)$.  This can be done by the Noetherian property of $R$.
\item[(ii)]  Find an element $d \in R^{\circ}$ that is simultaneously a \homgenbs{} test element for all triples $(R, \sS, \ba^t)$ where $\sS = R\langle\psi\rangle$ ranges over all \freegen{} subalgebras of $\sT$ generated by $\sT_0$ and a single product $\psi$ of the $\phi_i$.
\item[(iii)]  Observe that $\tau_b(R; \sT, \ba^t)$ can be generated elements which are contained in the test ideals $\tau_b(R; \sS, \ba^t)$ for various $\sS$ as in step (ii).
\end{itemize}

We first prove step (i).

\begin{proposition}
\label{PropCanReplaceByFiniteGenSet}
 Suppose that $(R, \sT, \ba^t)$ is a \ureduced{} triple.  Then there exists a subalgebra $\sT'$ generated by $R \cong \sT_0$ and finitely many additional homogeneous $\phi_i$ such that $\tau_b(R; \sT, \ba^t) = \tau_b(R; \sT', \ba^t)$.  Furthermore, we can assume that the $\phi_i$'s are non-zero at every minimal prime of $R$ and thus that $(R, \sT', \ba^t)$ is also \ureduced.
\end{proposition}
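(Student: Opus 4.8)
The plan is to exploit the fact that the big test ideal $\tau_b(R;\sT,\ba^t)$ is a single ideal of the Noetherian ring $R$, hence finitely generated, together with the explicit description of it provided by the proof of Theorem~\ref{ThmTestIdealExists}. Concretely, once we fix a \homgenbs{} test element $c \in \ba \cap R^{\circ}$ for $(R,\sT,\ba^t)$ --- which exists because the triple is \ureduced{} and hence \reduced{}, by Proposition~\ref{PropHDBSTestEltsExist} --- we have
\[
\tau_b(R;\sT,\ba^t) \;=\; \sum_{e \geq 0} \sum_{\phi \in \sT_e} \phi\!\left(F^e_*\, \ba^{\lceil t(p^e-1)\rceil}(c)\right).
\]
First I would pick finitely many homogeneous generators of this ideal: each such generator lies in $\phi\!\left(F^e_*\,\ba^{\lceil t(p^e-1)\rceil}(c)\right)$ for some single homogeneous $\phi \in \sT_e$, $e > 0$ (the degree-zero piece contributes only multiples of $c$, which can be absorbed into the $e>0$ terms by precomposing with a \reduced{} element, or handled separately by including one extra \reduced{} $\phi$). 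This yields finitely many homogeneous elements $\phi_1,\dots,\phi_m$, with $\phi_i \in \sT_{e_i}$, such that the subalgebra $\sT'$ generated by $\sT_0 \cong R$ and $\phi_1,\dots,\phi_m$ already satisfies $\tau_b(R;\sT',\ba^t) \supseteq \tau_b(R;\sT,\ba^t)$; the reverse containment $\tau_b(R;\sT',\ba^t) \subseteq \tau_b(R;\sT,\ba^t)$ is automatic from part (i) of the properties proposition, since $\sT' \subseteq \sT$ and $c$ remains a \homgenbs{} test element for the smaller triple (one checks the smaller triple is still \reduced{} since $\sT_0$ is unchanged and we may include a \reduced{} $\phi$ among the $\phi_i$). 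Thus $\tau_b(R;\sT',\ba^t) = \tau_b(R;\sT,\ba^t)$.

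For the ``furthermore'' clause, I would arrange that each $\phi_i$ is \reduced{}, i.e.\ non-zero at every minimal prime of $R$. Here is where the \ureduced{} hypothesis (as opposed to merely \reduced{}) is essential: for each degree $e_i$ with $\sT_{e_i} \neq 0$, the module $\sT_{e_i}$ is generated as an $F^{e_i}_* R$-module by \reduced{} elements. So when I chose $\phi_i \in \sT_{e_i}$ to account for a generator of the test ideal, I may instead write $\phi_i$ as an $F^{e_i}_* R$-linear combination $\phi_i = \sum_j (\psi_{ij} \cdot x_{ij})$ of \reduced{} elements $\psi_{ij}$ and replace $\phi_i$ in our generating set by the finite collection $\{\psi_{ij}\}_j$; by Remark~\ref{RemarkNaturalMapsToR}, $\Image(\rho_{e_i,0}|_{\phi_i \cdot F^{e_i}_*\ba^{\lceil t(p^{e_i}-1)\rceil}})$ is contained in $\sum_j \Image(\psi_{ij} \cdot F^{e_i}_*\ba^{\lceil t(p^{e_i}-1)\rceil})$, and more to the point each term $\phi_i(F^{e_i}_*\ba^{\lceil t(p^{e_i}-1)\rceil}(c)) \subseteq \sum_j \psi_{ij}(F^{e_i}_*\ba^{\lceil t(p^{e_i}-1)\rceil}(c))$ --- expand $\phi_i$ as a sum and note $(\psi_{ij}\cdot x_{ij})(F^{e_i}_* y) = \psi_{ij}(F^{e_i}_* x_{ij}y)$ with $x_{ij}y$ still in $\ba^{\lceil t(p^{e_i}-1)\rceil}(c)$ up to absorbing $x_{ij}$, which lands us inside $\psi_{ij}(F^{e_i}_*\ba^{\lceil t(p^{e_i}-1)\rceil}(\text{appropriate element}))$ contributing to the test ideal of the new subalgebra. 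Enlarging the generating set only enlarges the test ideal, and it stays $\subseteq \tau_b(R;\sT,\ba^t)$ by property (i) again since all $\psi_{ij} \in \sT$. Hence the new $\sT'$ is \ureduced{} --- indeed for each nonzero graded piece of $\sT'$, the generators are among products of the \reduced{} $\psi_{ij}$, which are \reduced{} since a product of \reduced{} elements is \reduced{} (non-vanishing at a minimal prime $\eta$ is checked after localizing at $\eta$, where reducedness means surjectivity of $\psi$ after further passing to the residue field, and composition of surjections is surjective) --- and it still has $\tau_b(R;\sT',\ba^t) = \tau_b(R;\sT,\ba^t)$.

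The main obstacle I anticipate is the careful bookkeeping in the replacement step: I need to be sure that after expanding each $\phi_i$ into \reduced{} pieces, (a) the test ideal does not shrink below $\tau_b(R;\sT,\ba^t)$ --- handled by the inclusion of images above --- and (b) it does not grow beyond it --- handled by $\sT' \subseteq \sT$ and monotonicity of test ideals in the subalgebra. The subtlety is purely that the $\ba^t$-coefficients $\ba^{\lceil t(p^e-1)\rceil}$ must be tracked through the manipulation $\phi \cdot x$, but since each $x_{ij} \in F^{e_i}_* R$ and we are free to multiply the ideal argument by arbitrary ring elements (this only shrinks the image), no coefficient ever gets lost; the degree is preserved so the exponent $\lceil t(p^{e_i}-1)\rceil$ is unchanged. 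One should also note that $\ba \cap R^{\circ} \neq \emptyset$ passes trivially to $(R,\sT',\ba^t)$ since $\ba$ is untouched, so \ureducedness{} of the triple follows from \ureducedness{} of the pair $(R,\sT')$.
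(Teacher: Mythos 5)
The crux of your argument is the forward containment $\tau_b(R;\sT,\ba^t)\subseteq\tau_b(R;\sT',\ba^t)$, and the step carrying it --- the assertion that your fixed \homgenbs{} test element $c$ for $(R,\sT,\ba^t)$ ``remains a \homgenbs{} test element for the smaller triple'' --- is a genuine gap.  Test-element-hood does not pass to subalgebras: the definition demands, for each $d\in R^{\circ}$, a witnessing map $\phi$ lying \emph{inside the algebra}, and shrinking from $\sT$ to $\sT'$ may destroy all witnesses.  What you actually need is $c\in\tau_b(R;\sT',\ba^t)$, so that uniform compatibility pushes your chosen generators $\phi_i\bigl(F^{e_i}_*\,\ba^{\lceil t(p^{e_i}-1)\rceil}(c)\bigr)$ into the smaller test ideal; nothing in your recipe guarantees this (recall every \homgenbs{} test element for a triple lies in its big test ideal, so the claim is equivalent to one about membership in $\tau_b(R;\sT',\ba^t)$).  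Concretely: take $R=k[x]$, $\ba=R$, $\sT=\sC(R)$, and $c=1$, a legitimate \homgenbs{} test element since $R$ is strongly $F$-regular.  The generator $1$ of $\tau_b(R;\sC(R))=R$ is witnessed by the single homogeneous map $\phi$ defined by $\phi(F_*z)=\Phi(F_*(x^{p-1}z))$, where $\Phi$ generates $\Hom_R(F_*R,R)$ as an $F_*R$-module; $\phi$ is even \reduced{}.  But $\tau_b(R;R\langle\phi\rangle)=(x)$, so $1\notin\tau_b(R;R\langle\phi\rangle)$, $c=1$ is not a \homgenbs{} test element for $(R,R\langle\phi\rangle)$, and the subalgebra your construction produces fails the desired equality.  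So the construction, as written, can actually output a wrong $\sT'$; the same unproved membership of $c$ is also what your ``furthermore'' step leans on after replacing each $\phi_i$ by \reduced{} generators $\psi_{ij}$.

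The repair within your strategy is precisely the uniformity idea the paper isolates for a later step (Lemma \ref{LemConditionForTestElement} and Proposition \ref{PropUniformExistenceOfTestElements}): fix one \reduced{} $\phi_0\in\sT_{e_0}$ and take $c$ to be the \homgenbs{} test element produced by the proof of Proposition \ref{PropHDBSTestEltsExist}, which uses only powers of $\phi_0$; such a $c$ is a \homgenbs{} test element for $R\langle\phi_0\rangle$ and hence for every subalgebra of $\sT$ containing $\phi_0$.  Putting $\phi_0$ among your generators then gives $c\in\tau_b(R;\sT',\ba^t)$, and your inclusion-of-images bookkeeping goes through.  The paper's own proof of Proposition \ref{PropCanReplaceByFiniteGenSet} avoids test elements entirely and is softer: it adds \reduced{} homogeneous generators one at a time, at each stage choosing some $\phi\in\sT_e$ with $\phi(F^e_*\,\ba^{\lceil t(p^e-1)\rceil}\tau_b(R;\sT^k,\ba^t))\nsubseteq\tau_b(R;\sT^k,\ba^t)$ (possible, and possible with a \reduced{} $\phi$, by the \ureduced{} hypothesis), so the test ideals strictly increase; the Noetherian property of $R$ forces termination, and at termination the current test ideal is uniformly $(\sT,\ba^t,F)$-compatible and meets $R^{\circ}$, hence contains, so equals, $\tau_b(R;\sT,\ba^t)$.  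You should adopt one of these two routes; as it stands your proof does not establish the forward containment.
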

\begin{proof}
Begin by choosing any homogeneous $\phi_1$ that is non-zero at every minimal prime of $R$ and set $\sT^1$ to be the subalgebra of $\sT$ generated by $R$ and $\phi_1$.  Note that $\tau_b(R; \sT^1, \ba^t)$ is the smallest ideal $J$ such that $J \cap R^{\circ} \neq \emptyset$ and such that $\phi(F^e_* \ba^{\lceil t(p^e - 1) \rceil}) \subseteq J$ for all $e \geq 0$ and $\phi \in \sT^1_e$.  Therefore, since $\sT^1 \subseteq \sT$, $\tau_b(R; \sT^1, \ba^t) \subseteq \tau_b(R; \sT, \ba^t)$.  If we have equality, we are done, set $\sT' = \sT^1$.  Otherwise choose some homogeneous $\phi_2 \in \sT_{e_2}$, such that $\phi_2(F^{e_2}_* \tau_b(R; \sT^1, \ba^t)) \nsubseteq \tau_b(R; \sT^1, \ba^t)$.  We also assume that $\phi_2$ is non-zero at every minimal prime at $R$ (note that this is possible since $(R, \sT, \ba^t)$ is \ureduced).  Set $\sT_2$ to be the subalgebra of $\sT$ generated by $\sT_1$ and $\phi_2$.

Note that by hypothesis, $\tau_b(R; \sT^1, \ba^t) \subsetneq \tau_b(R; \sT^2, \ba^t)$.  Again, if $\tau_b(R; \sT^2, \ba^t)$ is equal to $\tau_b(R; \sT, \ba^t)$, we are done, set $\sT' = \sT^2$.  Otherwise, we can continue the process.  However, this must stop eventually since $R$ is Noetherian.
\end{proof}

We now need to find an element that is simultaneously a \homgenbs{} test element for all triples $(R, R\langle \psi \rangle, \ba^t)$ as in (ii).

\begin{proposition}
\label{PropUniformExistenceOfTestElements}
Suppose that $(R, \sT', \ba^t)$ is a triple and that $\sT'$ is generated by finitely many homogeneous $\phi_i$ each of which are non-zero at all of the minimal primes of $R$.  We will use $\phi_{i_1, \ldots, i_n}$ to denote the product $\phi_{i_1} \cdot \dots \cdot \phi_{i_n}$.  Set $\sS^{i_1, \ldots, i_n} = R\langle\phi_{i_1, \ldots, i_n}\rangle$ to be the subalgebra of $\sT'$ generated by $R$ and $\phi_{i_1, \ldots, i_n}$ and suppose that $c_i \in \ba \cap R^{\circ}$ is a \homgenbs{} test element for $(R, \sS^i, \ba^t)$.  Then there exists a single $c \in \ba \cap R^{\circ}$ which is a \homgenbs{} test element for every triple $(R, \sS^{i_1, \ldots, i_n}, \ba^t)$.
\end{proposition}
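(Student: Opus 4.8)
The plan is to reduce the problem of finding a single $c$ that works for all the algebras $\sS^{i_1,\ldots,i_n}$ to the problem of understanding the degeneracy loci of the finitely many generators $\phi_i$, and then to run a test-element construction analogous to the proof of Proposition \ref{PropHDBSTestEltsExist} but carefully keeping track of the fact that only finitely many ``building blocks'' are involved. First I would recall that, since each $\phi_i$ is non-zero at every minimal prime of $R$, each degeneracy locus $\vlocus(\phi_i)$ is a closed subset of $X = \Spec R$ not containing any component of $X$; hence its union $Z := \bigcup_i \vlocus(\phi_i)$ is a proper closed subset, so we can pick $c_0 \in R^\circ$ vanishing on $Z$ and with $R_{c_0}$ strongly regular (shrinking to the regular locus and using $F$-finiteness of the normalization / excellence to see that a nonempty open set is regular; we may also absorb a factor placing $c_0 \in \ba$). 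The key point of this choice is that on $R_{c_0}$ every $\overline{\phi_i}$ generates $\Hom_{R_{c_0}}(F^{e_i}_* R_{c_0}, R_{c_0})$ as a module, and hence so does every product $\overline{\phi_{i_1,\ldots,i_n}}$, by the multiplicativity argument of \cite[Corollary 3.9]{SchwedeFAdjunction} used in Lemma \ref{InitialChoiceOfC}. Thus $(R_{c_0}, \sS^{i_1,\ldots,i_n}_{c_0}, \ba_{c_0}^t)$ is strongly $F$-regular \emph{simultaneously} for all tuples, exactly as in Lemma \ref{InitialChoiceOfC}, and for each $d \in R^\circ$ and each tuple there are exponents $m,n$ with $c_0^m \in \phi_{i_1,\ldots,i_n}^{\,n}(F_*(d)\ba^{\lceil t(p^{\bullet}-1)\rceil})$.

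Next I would try to upgrade this ``$R_{c_0}$ works for everybody'' statement into a genuine element of $R^\circ$. The naive approach — apply Proposition \ref{PropHDBSTestEltsExist} to each $\sS^{i_1,\ldots,i_n}$ and multiply the resulting test elements — fails because there are infinitely many tuples. The fix is to observe that the construction in the proof of Proposition \ref{PropHDBSTestEltsExist} produces, for a fixed $\phi$, the test element $c_0^{3m_1}$ where $m_1$ depends only on how $c_0$ enters the relation $c_0^{m_1} \in \phi^{n_1}(F_* (1)\,\ba^{\lceil\cdots\rceil})$ for the fixed $\phi$. So it suffices to find a single exponent $N$ (independent of the tuple) and a single seed relation of the form $c_0^{N} \in \phi_{i_1,\ldots,i_n}^{\,n}(F_* (1)\,\ba^{\lceil\cdots\rceil})$ for every tuple; then $c := c_0^{3N}$ (times a harmless factor to land in $\ba$) is a \homgenbs{} test element for every $\sS^{i_1,\ldots,i_n}$ by repeating the two displayed computations in the proof of Proposition \ref{PropHDBSTestEltsExist} verbatim with $c_0^N$ in place of $c^{m_1}$. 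To get such a uniform $N$: the generators of $\Hom_{R_{c_0}}(F^{e_i}_*R_{c_0},R_{c_0})$ are the finitely many $\overline{\phi_i}$, and a product $\overline{\phi_{i_1,\ldots,i_n}}$ generating its Hom-module means that $1$ lies in the image of $\overline{\phi_{i_1,\ldots,i_n}}$ applied to a finitely generated module; clearing the denominator $c_0$ introduces an exponent bounded in terms of the generators of $R_{c_0}$ over $R$ and of the modules $\Hom_R(F^{e_i}_*R,R)$, which are finitely many fixed data. More precisely, I would fix generators of each $\Hom_R(F^{e_i}_*R,R)$ and of $R_{c_0}$ as an $R$-algebra, and show that the ``denominator power'' needed to witness $1 \in \overline{\phi_{i_1,\ldots,i_n}}(F_*(dR)\,\ba_{c_0}^{\cdots})$ in $R$ (after composing enough times so that $\sT'_{\bullet}$ agrees with $\sC(R_{c_0})_{\bullet}$ in the relevant degree) is bounded uniformly, because composing the $\phi_i$'s only multiplies finitely many fixed matrices of denominators.

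The main obstacle, and the step I expect to require the most care, is precisely this uniform bound on the exponent $N$ across the infinitely many tuples $(i_1,\ldots,i_n)$: a priori, longer products could require clearing higher and higher powers of $c_0$. The resolution should come from the fact that after localizing at $c_0$ everything is governed by finitely many fixed free modules and the finitely many maps $\overline{\phi_i}$, so that ``$\overline{\phi_{i_1,\ldots,i_n}}$ generates'' is witnessed by a product of finitely many fixed coefficient matrices; one then argues that a single power $c_0^{m_i}$ suffices to express each individual $\overline{\phi_i}$ as surjective onto its target and that these combine multiplicatively without the exponent growing — or, more robustly, one uses that it is enough to find \emph{one} good product for each tuple, and the relation $c_0^{m_1} \in \phi_i^{n_1}(F_*(1)\ba^{\cdots})$ for each single $\phi_i$ already bootstraps (via Remark \ref{RemIf1InThen1InAgain} and the self-composition trick in the proof of Proposition \ref{PropHDBSTestEltsExist}) to a relation for all products $\phi_{i_1,\ldots,i_n}$ with the \emph{same} exponent $3\max_i m_i$. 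I would push on this last observation: since $\phi_{i_1,\ldots,i_n} = \phi_{i_1}\circ F_*\phi_{i_2}\circ\cdots$, and since each $\phi_{i_j}$ individually satisfies $c_0^{m_{i_j}} \in \phi_{i_j}^{\,n_{i_j}}(F_*\,\ba^{\cdots})$, one chains these inclusions — exactly as in the displayed $c^{2m_1}$ computation — to obtain $c_0^{M} \in \phi_{i_1,\ldots,i_n}^{\,n}(F_*\,\ba^{\cdots})$ with $M$ depending only on $\max_i m_i$, and then $c := (\text{element of }\ba\cap R^\circ)\cdot c_0^{3M}$ does the job uniformly.
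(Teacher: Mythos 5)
Your overall architecture is the same as the paper's (invert a single element $c_0$ over which $R$ is regular and each $\overline{\phi_i}$ generates its Hom module, deduce that every product $\overline{\phi_{i_1,\ldots,i_n}}$ generates as well, then produce a seed relation and bootstrap by self-composition), but the decisive uniformity step has a genuine gap. The relations you chain, $c_0^{m_{i_j}} \in \phi_{i_j}^{\,n_{i_j}}(F_*\,\ba^{\lceil \cdot \rceil})$, involve \emph{powers} of the individual generators, so the witness your chain produces is a word of the form $\phi_{i_1}^{n_{i_1}} \cdot \ldots \cdot \phi_{i_n}^{n_{i_n}}$, which is in general not a power of $\phi_{i_1,\ldots,i_n}$ and hence not an element of $\sS^{i_1,\ldots,i_n} = R\langle \phi_{i_1,\ldots,i_n}\rangle$; but a \homgenbs{} test element for $(R, \sS^{i_1,\ldots,i_n}, \ba^t)$ must be witnessed by maps lying in that algebra. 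The missing idea is precisely the one the paper flags as an improvement over Lemma \ref{InitialChoiceOfC}: since $R_{c_0}$ is regular, hence $F$-pure, there is a surjection in degree exactly $e_i$, and because $\overline{\phi_i}$ generates $\Hom_{R_{c_0}}(F^{e_i}_* R_{c_0}, R_{c_0})$ any such map is a premultiple of $\overline{\phi_i}$; clearing denominators then gives $c_0^{m_i} \in \phi_i(F^{e_i}_*(c_0)\,\ba^{\lceil t(p^{e_i}-1)\rceil})$ with a \emph{single} application of $\phi_i$, no power $n_i$. Without this, your chain does not land in the right algebra.

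The second problem is the exponent control itself. Chaining unbalanced relations (with $1$, rather than a power of $c_0$, inside the argument of $\phi_{i_j}$) does not give a bound depending only on $\max_i m_i$: each insertion increases the exponent, so the naive $M$ grows with the length $n$ of the tuple, which is unbounded over all products. The paper's resolution is to balance \emph{before} chaining: bootstrap each generator's relation into the form $c_i \in \phi_i(F^{e_i}_*(c_i)\,\ba^{\lceil t(p^{e_i}-1)\rceil})$ (its condition (d), via the $c_i'^{2m}$ trick), observe that this condition is preserved when $c_i$ is multiplied by any element of $R$, so that the single product $c' = \prod_i c_i$ satisfies it for every $i$ simultaneously; the chain then telescopes with no growth of the exponent, giving $c' \in \phi_{i_1,\ldots,i_n}(F_*(c')\,\ba^{\lceil \cdot \rceil})$ for every tuple, and Lemma \ref{LemConditionForTestElement} (whose hypotheses are exactly regularity of $R_{c'}$, $\vlocus \subseteq V(c')$, and this balanced relation) yields the uniform test element $c = c'^2$. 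Your proposal invokes the bootstrap but applies it per product, which reintroduces the tuple-dependence you are trying to eliminate; once the two missing points are supplied (single application of each $\phi_i$, and balancing before chaining), your argument becomes essentially the paper's proof.
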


Before proving this we need a lemma.

\begin{lemma}
\label{LemConditionForTestElement}
Suppose that $(R, R\langle\phi\rangle, \ba^t)$ is a triple where $\phi$ is a homogeneous element of $\Hom_R(F^e_* R, R)$.  Further suppose that $c \in \ba \cap R^{\circ}$ is an element such that $R_c$ is regular, $\vlocus(\phi) \subseteq V(c)$ and that $c \in \phi(F^e_* (c) \ba^{\lceil t(p^e - 1) \rceil} )$.  Then $c^2$ is a \homgenbs{} test element for $(R, R\langle\phi\rangle, \ba^t)$.
\end{lemma}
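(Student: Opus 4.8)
The plan is to run the usual test-element bootstrap, as in the proof of Proposition \ref{PropHDBSTestEltsExist}, taking advantage of the fact that the hypothesis $c \in \phi(F^e_*(c)\ba^{\lceil t(p^e-1)\rceil})$ already carries a copy of the ideal $(c)$ inside the Frobenius; this extra strength is what will let the conclusion be about $c^2$ rather than some higher power of $c$. As a first step I would record two consequences of the hypotheses. Since $c \in R^{\circ}$, no minimal prime of $R$ lies in $V(c)$, so $\vlocus(\phi) \subseteq V(c)$ forces $\phi$ to be \reduced{}; hence $(R, R\langle\phi\rangle, \ba^t)$ is a \reduced{} triple with \reduced{} homogeneous element $\phi$, and $c$ satisfies the hypotheses of Lemma \ref{InitialChoiceOfC} (the case ``$R_c$ regular'' being among those listed there). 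Applying that lemma with $\sT = R\langle\phi\rangle$ gives, for each $d \in R^{\circ}$, integers $m, n \geq 1$ with $c^m \in \phi^n(F^{ne}_*(d)\ba^{\lceil t(p^{ne}-1)\rceil})$; and since multiplying this membership by a power of $c$ and reabsorbing it into the ideal $(d)$ only increases $m$, we may take $m$ to be any sufficiently large integer of the form $p^{Ne}$, with $n$ unchanged.

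Next I would bootstrap the hypothesis to all Frobenius levels: by induction on $N$,
\[
(c) \subseteq \phi^N\big(F^{Ne}_*\ba^{\lceil t(p^{Ne}-1)\rceil}\big)\qquad\text{for all }N\geq 1.
\]
The inductive step substitutes the level-$Ne$ containment into the ideal slot of the level-$e$ hypothesis $c\in\phi(F^e_*(c)\ba^{\lceil t(p^e-1)\rceil})$, slides the resulting ideal across $\phi^N$ via the identity $a\cdot\phi^N(F^{Ne}_*x)=\phi^N(F^{Ne}_*(a^{p^{Ne}}x))$, composes using $\phi\circ(F^e_*\phi^N)=\phi^{N+1}$, and then invokes the elementary inequality $p^{Ne}\lceil t(p^e-1)\rceil+\lceil t(p^{Ne}-1)\rceil\ge \lceil t(p^{(N+1)e}-1)\rceil$ (the left side being an integer that is $\ge t(p^{(N+1)e}-1)$) to absorb the resulting product of ideals.

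Finally I would combine the two. Fix $d \in R^{\circ}$, choose $m = p^{Ne}$ and $n$ as above, so that $(c^{p^{Ne}})\subseteq \phi^n(F^{ne}_*(d)\ba^{\lceil t(p^{ne}-1)\rceil})$. From the bootstrap,
\[
c^2 = c\cdot c \;\in\; c\cdot\phi^N\big(F^{Ne}_*\ba^{\lceil t(p^{Ne}-1)\rceil}\big) \;=\; \phi^N\big(F^{Ne}_*(c^{p^{Ne}})\ba^{\lceil t(p^{Ne}-1)\rceil}\big);
\]
substituting the displayed containment of $(c^{p^{Ne}})$, sliding the leftover ideal $\ba^{\lceil t(p^{Ne}-1)\rceil}$ across $\phi^n$, collapsing $\phi^N\circ(F^{Ne}_*\phi^n)=\phi^{N+n}$, and applying the same exponent inequality once more, one reaches $c^2 \in \phi^{N+n}(F^{(N+n)e}_*(d)\ba^{\lceil t(p^{(N+n)e}-1)\rceil})$. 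Since $\phi^{N+n}\in (R\langle\phi\rangle)_{(N+n)e}$ and $(N+n)e>0$, this is precisely the statement that $c^2$ is a \homgenbs{} test element for $(R,R\langle\phi\rangle,\ba^t)$.

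I expect the only real work to lie in the bookkeeping, with the induction of the second step (the nested substitution together with the ceiling-exponent inequality) the most error-prone part; repeatedly pushing ideals through the functors $F^{\bullet}_*(-)$ and the iterates $\phi^{\bullet}$ while keeping the exponents $\lceil t(p^{\bullet}-1)\rceil$ aligned is routine but must be done carefully. There is no conceptual hurdle beyond Lemma \ref{InitialChoiceOfC}, which is where regularity of $R_c$ and the containment $\vlocus(\phi)\subseteq V(c)$ are genuinely used; everything afterward is the formal manipulation sketched above.
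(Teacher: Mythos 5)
Your proposal is correct and follows essentially the same route as the paper's proof: use Lemma \ref{InitialChoiceOfC} (after noting $\phi$ is \reduced{}) to get $c^m \in \phi^n(F^{ne}_* (d)\ba^{\lceil t(p^{ne}-1)\rceil})$, bootstrap the hypothesis $c \in \phi(F^e_*(c)\ba^{\lceil t(p^e-1)\rceil})$ to all powers $\phi^N$, and then splice the two containments together with the ceiling-exponent inequality to land $c^2$ in $\phi^{N+n}(F^{(N+n)e}_*(d)\ba^{\lceil t(p^{(N+n)e}-1)\rceil})$. The only differences are cosmetic: the paper keeps the factor $(c)$ inside its inductive containment and chooses $l$ with $p^{le}+1 \geq m$, whereas you drop $(c)$ and instead enlarge $m$ to exactly $p^{Ne}$ --- both are equivalent bookkeeping.
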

\begin{proof}
Choose $d \in R^{\circ}$.  Since $R_c$ is regular and $\vlocus(\phi) \subseteq V(c)$ we have that $R_c$ is strongly $F$-regular and $\overline \phi$ generates $\Hom_{R_c}(F^e_* R_c, R_c)$ as an $F^e_* R_c$-module, see Lemma \ref{InitialChoiceOfC}.  Therefore there exists an $n$ so that $1 \in \overline{\phi^n}(F^{ne}_* dR_c )$. By clearing denominators we see that $c^m \in \phi^n(F^{ne}_* (d) \ba^{\lceil t(p^{ne} - 1) \rceil} )$ (note $c$ was in $\ba$).  Roughly speaking, our goal is now to reduce $m$ to a number which is independent of $d$.

On the other hand, by induction, we claim for all $l \geq 1$ that $$c \in \phi^l(F^{le}_* (c) \ba^{\lceil t(p^{le} - 1) \rceil}).$$  This is because
\[
\begin{split}
\phi^l\left(F^{le}_* (c) \ba^{\lceil t(p^{le} - 1) \rceil}\right) \\
\subseteq \phi^l\left(F^{le}_* \ba^{\lceil t(p^{le} - 1) \rceil} \phi(F^e_* (c) \ba^{\lceil t(p^e - 1) \rceil} ) \right) \\
= \phi^l\left(F^{le}_* \phi(F^e_* (c) (\ba^{\lceil t(p^{le} - 1) \rceil})^{[p^e]} \ba^{\lceil t(p^e - 1) \rceil} ) \right) \\
\subseteq \phi^{l+1}\left( F^{(l+1)e}_* (c) (\ba^{\lceil t(p^{(l+1)e} - 1) \rceil}) \right)
\end{split}
\]
We now show that there exists an $l$ such that $c^2 \in \phi^{l}(F^{le}_* (c^m) \ba^{\lceil t(p^{le} - 1) \rceil})$.  Choose $l$ such that $p^{le} + 1 \geq m$.  Then
\[
\begin{split}
c^2 \in c \cdot \phi^l(F^{le}_* (c) \ba^{\lceil t(p^{le} - 1) \rceil}) \subseteq \phi^l(F^{le}_* (c^{p^{le} + 1}) \ba^{\lceil t(p^{le} - 1) \rceil}) \\
\subseteq \phi^l(F^{le}_* (c^{m}) \ba^{\lceil t(p^{le} - 1) \rceil})
\end{split}
\]
as desired.  But then we have that
\[
\begin{split}
c^2 \in \phi^l(F^{le}_* c^{m} \ba^{\lceil t(p^{le} - 1) \rceil}) \\
\subseteq \phi^l\left(F^{le}_* \ba^{\lceil t(p^{le} - 1) \rceil} \phi^n(F^{ne}_* (d) \ba^{\lceil t(p^{ne} - 1) \rceil} )\right) \\
= \phi^{l+n}\left( F^{(l+n)e}_*  (d) (\ba^{\lceil t(p^{le} - 1) \rceil})^{[p^{ne}]} \ba^{\lceil t(p^{ne} - 1) \rceil} )\right) \\
\subseteq \phi^{l+n}\left( F^{(l+n)e}_*   (d) \ba^{\lceil t(p^{(l+n)e} - 1) \rceil} )\right)
\end{split}
\]
which completes the proof.
\end{proof}

\begin{proof}[Proof of Proposition \ref{PropUniformExistenceOfTestElements}]
For each $\phi_i$, we claim we can choose $c_i \in \ba \cap R^{\circ}$ such that the following conditions hold:
\begin{itemize}
\item[(a)]  $R_{c_i}$ is regular.
\item[(b)]  $\vlocus(\phi_i) \subseteq V(c_i)$.
\item[(c)]  $c_i$ is a \homgenbs{} test element for $(R, \sS^i, \ba^t)$.
\item[(d)]  $c_i \in \phi_i(F^{e_i}_* (c_i)\ba^{\lceil t(p^{e_i}-1) \rceil})$.
\end{itemize}
Notice that finding a $c_i$ which satisfies (a), (b) and (c) is easy.  Therefore, fix a $c_i'$ satisfying the first three conditions.  To find a $c_i$ also satisfying condition (d), first we note that there exists some $m > 0$ such that
\begin{equation}
\label{EqnCtoMInside}
c_i'^m \in \phi_i(F^{e_i}_* (c_i')\ba^{\lceil t(p^{e_i}-1) \rceil}).
\end{equation}
To see this, localize at $c_i'$.  Then $R_{c_i'}$ is $F$-pure so that $1 \in \psi(F^{e_i}_* R_{c_i'})$ for some $\psi \in \sC(R_{c_i'})_{e_i}$.  But any such $\psi$ can be expressed as $\overline \phi_i$ pre-composed with some element of $R_{c_i'}$.   Therefore $1 \in \overline \phi_i(F^{e_i}_* R_{c_i'})$.  Clearing denominators proves that Equation (\ref{EqnCtoMInside}) holds for some $m$.  Note that this is a slight improvement over Lemma \ref{InitialChoiceOfC} since we need not raise $\phi_i$ to a power.
We then have that
\[
\begin{split}
c_i'^{2m}
\in c_i'^m \phi_i(F^{e_i}_* (c_i')\ba^{\lceil t(p^{e_i}-1) \rceil}) \\
= \phi_i(F^{e_i}_* (c_i'^{1 + mp^{e_i}}) \ba^{\lceil t(p^{e_i}-1) \rceil})\\
 \subseteq \phi_i(F^{e_i}_* (c_i'^{2m}) \ba^{\lceil t(p^{e_i}-1) \rceil}).
\end{split}
\]
Set $c_i$ to be $c_i'^{2m}$.

Also notice that if $c_i$ satisfies condition (d), then for every $d \in R$, $dc_i$ also satisfies condition (d) since
\[
\begin{split}
dc_i \in d \phi_i(F^{e_i}_* (c_i)\ba^{\lceil t(p^{e_i}-1) \rceil})\\
 = \phi_i(F^{e_i}_* (d^{p^e} c_i)\ba^{\lceil t(p^{e_i}-1) \rceil}) \\
\subseteq \phi_i(F^{e_i}_* (d c_i)\ba^{\lceil t(p^{e_i}-1) \rceil}).
\end{split}
\]
Set $c' = \prod_i c_i$.  Consider $\phi_{i_1, \dots, i_n} = \phi_{i_1} \cdot \dots \cdot \phi_{i_n}$.  Set $U = \Spec R_{c'} \subseteq \Spec R$.  Note that $U$ is regular.  Also note that at each (possibly non-closed) point $Q \in U$, $\overline{\phi_i}$ generates $\Hom_{R_Q}(F^{e_i}_* R_Q, R_Q)$ as an $F^{e_i}_* R_Q$-module.  Therefore $\overline{\phi_{i_1, \dots, i_n}}$ also generates $\Hom_{R_Q}(F^{e_{i_1} + \dots + e_{i_n}}_* R_Q, R_Q)$ as an $F^{e_{i_1} + \dots + e_{i_n}}_* R_Q$-module (this follows from \cite[Lemma 3.8]{SchwedeFAdjunction}).  In particular, $\vlocus(\phi_{i_1, \dots, i_n}) \subseteq V(c')$.

Finally, note that we have
\[
\begin{split}
c' \in \phi_{{i_1}}(F^{e_{i_1}}_* \ba^{\lceil t(p^{e_{i_1}} - 1) \rceil} (c') ) \\
\subseteq \phi_{{i_1}}(F^{e_{i_1}}_* \ba^{\lceil t(p^{e_{i_1}} - 1) \rceil} \phi_{e_{i_2}}(F^{e_{i_2}}_* \ba^{\lceil t(p^{e_{i_2}} - 1) \rceil} (c') ) )\\
 \subseteq \phi_{{i_1},{i_2}}(F^{e_{i_1}+e_{i_2}}_* \ba^{\lceil t(p^{e_{i_1} + e_{i_2}} - 1) \rceil} (c') ) ) \\
\subseteq \ldots \\
\subseteq \phi_{{i_1},\ldots, {i_n}}(F^{e_{i_1}+ \dots + e_{i_n}}_* \ba^{\lceil t(p^{e_{i_1} + \dots + e_{i_n}} - 1) \rceil} (c') ) ).
\end{split}
\]
Now apply Lemma \ref{LemConditionForTestElement} and set $c = c'^2$.
\end{proof}

We are now in a position to prove Theorem \ref{ThmMain} (that is, prove (iii) in the outline).

\begin{proof}[Proof of Theorem \ref{ThmMain}]
The containment $\supseteq$ is trivial, so we will prove the other containment.
Use Proposition \ref{PropCanReplaceByFiniteGenSet} to find finitely many \reduced{} homogeneous $\phi_i \in \sT$ generating a subalgebra $\sT'$ such that $\tau_b(R; \sT, \ba^t) = \tau_b(R; \sT', \ba^t)$.  By Proposition \ref{PropUniformExistenceOfTestElements}, we can choose $c \in \ba \cap R^{\circ}$ that is a \homgenbs{} test element for $(R, \sT^{i_1, \ldots, i_n}, \ba^t)$ for each subalgebra $\sT^{i_1, \ldots, i_n} = R\langle\phi_{i_1, \ldots, i_n}\rangle$ of $\sT$.  Again we use $\phi_{i_1, \ldots, i_n}$ to denote the product $\phi_{i_1} \cdot \dots \cdot \phi_{i_n}$

Now,
\[
\begin{split}
\tau_b(R; \sT', \ba^t) = \sum_{e \geq 0} \sum_{\phi \in \sT'_e} \phi(F^e_* (c) \ba^{\lceil t(p^e - 1) \rceil}) \\
= \sum_{e \geq 0} \left( \sum_{\phi_{i_1, \ldots, i_n} \in \sT'_e} \phi_{i_1, \ldots, i_n}(F^e_* (c) \ba^{\lceil t(p^e - 1) \rceil}) \right).
\end{split}
\]
Therefore we can choose generators for $\tau_b(R; \sT, \ba^t) = \tau_b(R; \sT', \ba^t)$ that are elements of $\phi_{i_1, \ldots, i_n}(F^e_* (c) \ba^{\lceil t(p^e - 1) \rceil})$ for various $\phi_{i_1, \ldots, i_n}$.  But any such generator is also contained clearly in
\[
\tau_b(R; \sS^{i_1, \ldots, i_n}, \ba^t)  = \tau_b(R; R\langle \phi_{i_1, \ldots i_n} \rangle, \ba^t)
\]
since $c$ is also a \homgenbs{} test element for $(R; \sS^{i_1, \ldots, i_n}, \ba^t)$.  This completes the proof.
\end{proof}

\section{Further remarks and questions}

\begin{remark}[Big verses finitistic test ideals]
The reader may have noticed that, in this paper, we dealt exclusively with the \emph{big} (aka non-finitistic) test ideal $\tau_b(R; \ba^t)$ and not the usual (aka finitistic) test ideal $\tau(R; \ba^t)$.  Roughly speaking, the big test ideal is made up of elements which ``test'' tight closure for all submodules of all modules, whereas the usual test ideal is made up of elements which ``test'' tight closure for all submodules of \emph{finitely generated} modules, see \cite{HochsterFoundations} and \cite{SchwedeCentersOfFPurity}.  The big test ideal and the usual test ideal are known to agree in many situations, see for example \cite{LyubeznikSmithStrongWeakFregularityEquivalentforGraded}, \cite{LyubeznikSmithCommutationOfTestIdealWithLocalization}, \cite{AberbachMacCrimmonSomeResultsOnTestElements}, \cite{HaraYoshidaGeneralizationOfTightClosure} and are conjectured to coincide in general (in particular, one always has the containment $\tau_b(R; \ba^t) \subseteq \tau(R; \ba^t)$).  Furthermore, if $K_X + \Delta$ is $\bQ$-Cartier, then $\tau_b(R; \Delta, \ba^t) = \tau(R; \Delta, \ba^t)$, see \cite[Theorem 2.8(2)]{TakagiInterpretationOfMultiplierIdeals}, \cite[Definition--Theorem 6.5]{HaraYoshidaGeneralizationOfTightClosure}.

The big test ideal is known to be much better behaved than the finitistic test ideal in general (for example, its formation is known to commute with localization and completion).  Finally, it is believed by experts that if it is discovered that $\tau_b(R; \ba^t) \subsetneq \tau(R; \ba^t)$ for some example, then the big test ideal is the ``correct'' notion in general.

Of course, it follows from this paper that if the finitistic test ideal $\tau(R; \ba^t)$ is equal to the same sum
\[
\sum_{K_X + \Delta \text{ $\bQ$-Cartier}} \tau_b(R; \Delta, \ba^t),
\]
then $\tau_b(R; \ba^t) = \tau(R; \ba^t)$.
\end{remark}

\begin{remark}[Relation with de Fernex and Hacon's multiplier ideal]
Suppose that $X = \Spec R$ is a normal variety in characteristic zero but $X$ is not necessarily $\bQ$-Gorenstein.  It has been asked whether $\mJ(X, \ba^t)$ (the multiplier ideal of de Fernex and Hacon) agrees with the (big) test ideal $\tau_b(R_p; \ba_p^t)$ after reduction to characteristic $p \gg 0$.  Initially, one might hope that the results of this paper might imply this result.  However, I believe that this paper only provides (strong) evidence that this is indeed the case.  The problem is that the $\Delta_i$ constructed in Corollary \ref{CorMain} seem to rely heavily on the particular characteristic we are working in, and so are probably not reduced from characteristic zero as well (at least not in a way in which their properties can be controlled).
\end{remark}

Consider the following question.

\begin{question}
Suppose that $(X, \ba^t)$ is a pair in characteristic $p > 0$.  Then does there exist a single effective $\bQ$-divisor $\Delta$ such that $K_X + \Delta$ is $\bQ$-Cartier with index not divisible by $p > 0$ and so that $\tau_b(X; \Delta, \ba^t) = \tau(X; \ba^t)$.
\end{question}

The work of de Fernex and Hacon suggests this may be true.  This question was also asked by the author and Karen Smith in \cite{SchwedeSmithLogFanoVsGloballyFRegular} and was affirmatively answered in the case that $X$ is strongly $F$-regular and $\ba = R$ (although the version where $\ba \neq R$ can also easily be obtained from the methods of this paper).

Finally, there are certain similarities between the methods employed in this paper and the methods of \cite[Theorem 4.3(i)]{SchwedeSmithLogFanoVsGloballyFRegular}.  The goal in \cite[Theorem 4.3(i)]{SchwedeSmithLogFanoVsGloballyFRegular} was also to find a map $\phi$ such that $(R, R\langle \phi \rangle)$ was strongly $F$-regular (although we phrased things in terms of finding the divisor associated to $\phi$, instead of $R\langle \phi \rangle$, as in \cite{SchwedeFAdjunction}).   One consideration that makes the setting of \cite{SchwedeSmithLogFanoVsGloballyFRegular} easier to work in, is that one must only find a single $\phi$ such that $1 \in \tau_b(R; R\langle \phi \rangle)$.  This $\phi$ is obtained explicitly by composing several potential such $\phi$ (but again, viewing them as divisors instead of maps).  The proof is thus somewhat more geometric.  Furthermore, much of the work we have to do in terms of keeping track of various test elements is unnecessary in the setting of \cite{SchwedeSmithLogFanoVsGloballyFRegular}.


\providecommand{\bysame}{\leavevmode\hbox to3em{\hrulefill}\thinspace}
\providecommand{\MR}{\relax\ifhmode\unskip\space\fi MR}
\providecommand{\MRhref}[2]{%
  \href{http://www.ams.org/mathscinet-getitem?mr=#1}{#2}
}
\providecommand{\href}[2]{#2}

\end{document}